\def\margin_comment#1{\marginpar{\sffamily{\tiny #1\par}\normalfont}}
\date{}
 \newtheorem{thm}{Theorem}[section]
 \numberwithin{equation}{section} 
 \numberwithin{figure}{section} 
 \theoremstyle{plain}
  \newtheorem*{thm*}{Theorem}
 \theoremstyle{definition}
\theoremstyle{plain}
\newtheorem{thm_A}{Theorem}
 \newtheorem*{defn*}{Definition}
 \theoremstyle{plain}
 \newtheorem{prop}[thm]{Proposition} 
 \theoremstyle{remark}
 \newtheorem{ex}[thm]{Example}
 \theoremstyle{remark}
 \newtheorem*{ex*}{Example}
 \theoremstyle{remark}
\theoremstyle{plain}
\theoremstyle{plain}
 \theoremstyle{plain}
 \newtheorem{lem}[thm]{Lemma} 
 \theoremstyle{definition}
 \newtheorem{defn}[thm]{Definition}
\newtheorem*{acknowledgment}{Acknowledgment}
{\catcode`\|=\active
  \gdef\Pres#1{\left\langle\:{\mathcode`\|"8000\let|\SetVert
#1}\:\right\rangle}}
\def\SetVert{\egroup\;\middle|\;\bgroup}
\begin{document}
\title{Knuth-Bendix algorithm and the conjugacy problems in monoids}
\author{Fabienne Chouraqui}
\maketitle
\begin{abstract}
We present an algorithmic approach to the conjugacy problems in  monoids, using rewriting systems. We extend the classical theory of rewriting developed by Knuth and Bendix to a rewriting that takes into account the cyclic conjugates.
   \end{abstract}
\indent
\section{Introduction}
\hspace{0.5cm}The use of string rewriting systems or Thue systems has been proved to be a very efficient tool to solve the word
problem. Indeed, Book shows that there is a linear-time algorithm to decide the word problem for a monoid that is
defined by a finite and complete rewriting system \cite{book_linear}.
A question that arises naturally is whether the use of rewriting systems may be an efficient tool for solving other
decision problems, specifically  the conjugacy problem. Several authors have studied this question, see
\cite{naren_otto2,naren_otto}, \cite{otto}, and \cite{pedersen}. The complexity of this question is due to some  facts. One point is that for monoids the conjugacy problem and the word problem are independent of each other \cite{otto}. This is different from the situation for groups.
Another point  is that in  semigroups and monoids,  there are several different notions of conjugacy that are not
equivalent in general. We describe them in the following.

Let $M$ be a monoid (or a semigroup)  generated by $\Sigma$ and let $u$ and $v$ be two words in  the free monoid
$\Sigma^{*}$. The right conjugacy problem asks  if there  is  a word  $x$ in the free monoid $\Sigma^{*}$ such that
$xv=_{M}ux$, and  is  denoted by $\operatorname{RConj}$. The left conjugacy problem asks if there is a word  $y$  in
the free monoid $\Sigma^{*}$ such that  $vy=_{M}yu$, and is denoted by $\operatorname{LConj}$. The conjunction of the
left and the right conjugacy problems is denoted by $\operatorname{Conj}$. The relations $\operatorname{LConj}$ and
$\operatorname{RConj}$ are reflexive and transitive but not necessarily symmetric, while $\operatorname{Conj}$ is  an
equivalence relation.
A different generalization of conjugacy asks if there are   words  $x,y$  in the free monoid such that  $u=_{M}xy$ and
$v=_{M}yx$. This is called the \emph{transposition problem} and it is denoted by  $\operatorname{Trans}$.
This relation is reflexive and symmetric, but not necessarily transitive.

 In general, if the answer to this question is positive then the answer
to the above questions is also positive, that is $\operatorname{Trans} \subseteq \operatorname{Conj} \subseteq
\operatorname{LConj},\operatorname{RConj}$.  For free monoids, Lentin and Schutzenberger show that
$\operatorname{Trans}= \operatorname{Conj}= \operatorname{LConj}=\operatorname{RConj}$ \cite{schutz} and  for monoids
with a special presentation (that is all the relations have the form $r=1$) Zhang shows that $\operatorname{Trans}=
\operatorname{RConj}$  \cite{zhang}. We denote by $\operatorname{Trans^{*}}$ the transitive closure of
$\operatorname{Trans}$. Choffrut shows that  $\operatorname{Trans^{*}}= \operatorname{Conj}=
\operatorname{LConj}=\operatorname{RConj}$ holds in a free inverse monoid $FIM(X)$ when restricted to the set of
non-idempotents \cite{choffrut}. He shows that $\operatorname{LConj}$ is an equivalence relation on $FIM(X)$ and he
proves the decidability of this problem in this case. Silva generalized the results of Choffrut to a certain class of
one-relator inverse monoids. He proves the decidability of $\operatorname{Trans}$ for $FIM(X)$ with one idempotent
relator \cite{silva}.

In this work, we use rewriting systems in order to solve  the conjugacy problems presented above in some semigroups and
monoids.
A special rewriting system satisfies the condition that all the rules have the form $l \rightarrow 1$, where $l$ is any
word. Otto  shows that $\operatorname{Trans}= \operatorname{Conj}= \operatorname{LConj}$
for a monoid with a special complete rewriting system and that $\operatorname{Trans}$ is an equivalence relation.
Moreover,  he shows that whenever the rewriting system is finite then the conjugacy problems are solvable \cite{otto}.
Narendran and Otto show that $\operatorname{LConj}$ and $\operatorname{Conj}$ are decidable for a finite,
length-decreasing and complete rewriting system \cite{naren_otto} and that $\operatorname{Trans}$
is not decidable \cite{naren_otto2}. We describe our approach to solve the conjugacy problems using rewriting systems
in the following.

    Let $M$ be the finitely presented monoid $\operatorname{Mon}\langle\Sigma\mid R\rangle$ and let $\Re$
be a complete rewriting system for $M$.
Let $u$ be a word in  $\Sigma^{*}$, we consider $u$ and all its cyclic conjugates in  $\Sigma^{*}$,   $\{u_{1}=u,
u_{2},..,u_{k}\}$,  and we apply on each element $u_{i}$ rules from $\Re$ (whenever this is possible).
We say that a word $u$ is \emph{cyclically irreducible} if $u$ and all its cyclic conjugates are irreducible modulo
$\Re$.
If for  some $1 \leq i \leq n$, $u_{i}$  reduces to $v$, then we say that $u$ \emph{cyclically reduces to $v$} and we
denote it by $u \looparrowright v$, where $\looparrowright$ denotes a binary relation on the words in $\Sigma^{*}$. 

 We  define on $\looparrowright$ the properties of terminating and confluent in the same way as for  $\rightarrow$ and
 if $\looparrowright$ is terminating and confluent then each word $u$ reduces to a unique cyclically
irreducible element denoted by $\rho(u)$. We have the following result that describes the relation between
$\looparrowright$  and the conjugacy problems, we write $\rho(u) \bumpeq \rho(v)$ for $\rho(u)$ and $\rho(v)$ are cyclic conjugates in the free monoid $\Sigma^{*}$.
 \begin{thm_A}
Let $M$ be the finitely presented monoid $\operatorname{Mon}\langle\Sigma\mid R\rangle$ and let $\Re$
be a complete rewriting system for $M$. Let $u$ and $v$ be words in  $\Sigma^{*}$. Assume that $\looparrowright$ is
terminating and confluent. Then\\
$(i)$ If $u$ and $v$ are transposed, then $\rho(u) \bumpeq \rho(v)$.\\
$(ii)$ If $\rho(u) \bumpeq \rho(v)$, then $u$ and $v$ are left and right conjugates.
 \end{thm_A}
   A \emph{completely simple semigroup} is a semigroup that has no non-trivial two-sided ideals and that
possesses minimal one-sided ideals. Using the results of McKnight and Storey in \cite{macknight}, it holds that
$\operatorname{Trans}= \operatorname{Conj}$ in a completely simple semigroup.
 So, in the case of completely simple semigroups and monoids with a  finite special complete rewriting system, our
result gives a solution to the conjugacy problems,  whenever $\looparrowright$ is terminating and confluent. Assuming that $\looparrowright$ is terminating, we find a sufficient condition for the confluence of
$\looparrowright$ that is based on an analysis of the rules in $\Re$. Using this condition, we  give an algorithm of
cyclical completion that is very much inspired by the Knuth-Bendix algorithm of completion. We have the following
main result.
  \begin{thm_A}
Let $M$ be the finitely presented monoid $\operatorname{Mon}\langle\Sigma\mid R\rangle$ and let $\Re$
be a complete rewriting system for $M$. Assume that $\looparrowright$ is terminating. Then there exists an algorithm
that gives as an output an equivalent relation $\looparrowright^{+}$ that is terminating and confluent (whenever it
converges).
 \end{thm_A}
    The paper is organized as follows. In Section $2$, we define the  binary relation $\looparrowright$ on the words in
$\Sigma^{*}$ and we
establish its main properties. In Section $3$, we describe the connection between a
terminating and confluent relation $\looparrowright$ and the conjugacy problems. In Section $4$, we adopt a local
approach as it is very difficult to decide wether a relation $\looparrowright$ is terminating, we define there the
notion of triple that is $\widetilde{c}$-defined. In Section $5$, we give a sufficient condition for the confluence
of $\looparrowright$, given that it terminates. In Section $6$, using the results from Section $5$, we give an
algorithm of cyclical completion that is very much inspired by the Knuth-Bendix algorithm of completion. Given a
terminating relation  $\looparrowright$, if it is not confluent then some new cyclical reductions are added in
order to obtain an equivalent relation $\looparrowright^{+}$ that is terminating and confluent. At last, in Section
$7$, we address the case of length-preserving rewriting systems. All along this paper, $\Re$ denotes a complete
rewriting system, not necessarily a finite one.

\begin{acknowledgment}
This work is a part of  the author's PhD research, done  at the Technion. I am very grateful to
Professor Arye Juhasz, for his patience, his encouragement and his many helpful remarks.
I am also grateful to Professor Stuart Margolis for his comments on this result. I would like to thank the anonymous  referee for his comments which significantly helped in improving the presentation of the paper.

\end{acknowledgment}
\section{Definition of the relation $\looparrowright$}
\hspace{0.5cm}Let $\Sigma$ be a non-empty set. We denote by $\Sigma^{*}$ the free monoid
generated by $\Sigma$; elements of $\Sigma^{*}$ are finite sequences
called \emph{words} and the empty word will be denoted by 1.
A \emph{rewriting system} $\Re$ on $\Sigma$ is a set of ordered
pairs in $\Sigma^{*}\times \Sigma^{*}$.
If $(l,r) \in \Re$ then for any words $u$ and $v$ in $\Sigma^{*}$,
 we say that the word $ulv$ \emph{reduces} to the word $urv$ and we
write $ulv\rightarrow urv$ . A word $w$ is said to be \emph{reducible}
if there is a word $z$ such that $w\rightarrow z$. If there is no
such $z$ we call $w$ \emph{irreducible}. A rewriting system $\Re$ is called \emph{terminating} \emph{(or
Noetherian)} if there is no infinite sequence of reductions.

We denote by  ``$\rightarrow^{*}$''  the reflexive
transitive closure of the relation ``$\rightarrow$''.
A rewriting system $\Re$ is called \emph{confluent} if for any words $u,v,w$ in $\Sigma^{*}$ ,
$w\rightarrow^{*}u$ and $w\rightarrow^{*}v$ implies that there is
a word $z$ in $\Sigma^{*}$ such that $u\rightarrow^{*}z$ and
$v\rightarrow^{*}z$ (that is if $u$ and $v$ have a common ancestor
then they have a common descendant). A rewriting system $\Re$ is called \emph{complete (or convergent)} if $\Re$ is
terminating and confluent. If $\Re$ is complete then every
word $w$ in $\Sigma^{*}$ has a unique irreducible equivalent word
that is called the \emph{normal form} of $w$.
We refer the reader to \cite{book,sims,handbook} for more details.

Let $\operatorname{Mon}\langle \Sigma \mid R \rangle$ be a finitely presented monoid $M$ and let $\Re$
be a complete rewriting system for $M$. Let $u$ and $v$ be elements in $\Sigma^{*}$. We define the following binary
relation
$ u \circlearrowleft^{1}v$ if  $v$ is a cyclic conjugate of $u$ obtained by moving the first letter of $u$ to be the
last
letter of $v$. We define $u\circlearrowleft^{i} v$ if $v$ is a cyclic conjugate of $u$ obtained from $i$
successive applications of $\circlearrowleft^{1}$.
We allow $i$ being $0$ and in this case  if $u\circlearrowleft^{0} v$ then $v=u$
 in the free monoid $\Sigma^{*}$. As an example, let $u$ be the word $abcdef$ in
$\Sigma^{*}$. If $u\circlearrowleft^{1} v$ and
$u\circlearrowleft^{4} w$, then $v$
is the word $bcdefa$ and $w$ is the word $efabcd$ in $\Sigma^{*}$.

We now translate the operation of taking cyclic conjugates and reducing them using the rewriting system $\Re$ in terms
of a binary relation. We  say that \emph{$u$ cyclically reduces to $v$} and we write
\begin{gather}
u\looparrowright v
\end{gather} if there is a sequence
\begin{gather}
u\circlearrowleft^{i} \widetilde{u}\rightarrow v
\end{gather}
From its definition, the relation $\looparrowright$ is \emph{not} compatible with concatenation. We define by
$\looparrowright^{*}$ the reflexive and transitive closure of $\looparrowright$, that is $u\looparrowright^{*} v$ if
there is a sequence $u\looparrowright u_{1}\looparrowright
u_{2}\looparrowright...u_{k-1}\looparrowright v$. We call such a sequence \emph{a sequence of cyclical reductions}. A
sequence of cyclical reductions is \emph{trivial} if it is equivalent to $\circlearrowleft^{*}$. We use the following notation:\\
- $\widetilde{u}$ denotes a cyclic conjugate of $u$ in the free monoid $\Sigma^{*}$.\\
- $u\bumpeq v$ if $u$ and $v$ are cyclic conjugates in the free monoid $\Sigma^{*}$.\\
- $u =_{M} v$ if the words $u$ and $v$ are equal as elements in $M$.\\
- $u = v$ if the words $u$ and $v$ are equal in the free monoid $\Sigma^{*}$.\\
  Now, we define the properties of terminating and confluent for $\looparrowright$ in the same way as it is done for
$\rightarrow$. Note that given words $u$ and $v$ if we write $u\looparrowright v$ or $u\looparrowright^{*} v$, we
assume implicitly that this is done in a finite number of steps.

\begin{defn}
We say that $\Re$ is \emph{cyclically terminating} (or $\looparrowright$ is \emph{terminating}) if there is no
(non-trivial) infinite sequence of cyclical reductions.
 We say that $\Re$ is \emph{cyclically confluent}
 (or $\looparrowright$ is \emph{confluent}) if for any words $u,v,w$ in $\Sigma^{*}$,
$w\looparrowright^{*}u$ and $w\looparrowright^{*}v$ implies that
there exist cyclically conjugates  words $z$  and $z'$ in $\Sigma^{*}$ such that
$u\looparrowright^{*}z$ and $v\looparrowright^{*}z'$.
We say that $\Re$ is \emph{locally cyclically confluent}
 (or $\looparrowright$ is \emph{locally confluent}) if for any words $u,v,w$ in $\Sigma^{*}$,
$w\looparrowright u$ and $w\looparrowright v$ implies that
there exist cyclically conjugates  words $z$  and $z'$ in $\Sigma^{*}$ such that
$u\looparrowright^{*}z$ and $v\looparrowright^{*}z'$.
We say that $\Re$ is \emph{cyclically complete} if $\Re$ is cyclically terminating
and cyclically confluent.
\end{defn}
\begin{ex}\label{ex_not_termin_no_cyc_irred}
Let $\Re=\{ ab\rightarrow bc,cd\rightarrow da\}$,  $\Re$ is a complete and finite  rewriting system.
Consider the word $bcd$,  we have  $bcd\rightarrow
bda\circlearrowleft^{2} abd\rightarrow bcd \rightarrow..$, that is there is an infinite sequence of cyclical
reductions. So, $\Re$ is not
cyclically terminating.
\end{ex}
\begin{defn}
We say that a word $u$ is \emph{cyclically irreducible} if $u$ and all its cyclic conjugates are irreducible modulo
$\Re$, that is  there is no $v$ in $\Sigma^{*}$ such that
$u\looparrowright v$ (unless $u \bumpeq v$). We define a \emph{cyclically irreducible
form of $u$} (if it  exists) to be a
cyclically irreducible word $v$ (up to $\bumpeq$) such that
$u\looparrowright^{*}v$.
We denote by $\rho(u)$ a cyclically irreducible
form of $u$, if it exists.
\end{defn}
\begin{ex}
Let $\Re=\{ab\rightarrow bc,cd\rightarrow da\}$ as before. From Ex. \ref{ex_not_termin_no_cyc_irred},  $bcd$ does not
have any cyclically irreducible form.  But, the word $acd$ has a unique cyclically irreducible form $ada$ since
$acd\rightarrow ada$ and no rule from $\Re$ can be applied on $ada$ or on  any cyclic conjugate of $ada$ in
$\Sigma^{*}$.
\end{ex}
 As in the case of  $\rightarrow$, the following facts hold also for $\looparrowright$  with a very similar proof. If
$\Re$ is cyclically terminating, then each word in $\Sigma^{*}$ has at least one cyclically irreducible form. If $\Re$
is cyclically confluent, then each word in $\Sigma^{*}$ has at most one cyclically irreducible form. So, if  $\Re$ is
cyclically complete, then each word in $\Sigma^{*}$ has a unique cyclically irreducible form.  Moreover, if $w \bumpeq
w'$, then $w$ and $w'$ have the same cyclically irreducible form (up to $\bumpeq$).
 Given that $\looparrowright$ is terminating,  $\Re$ is cyclically confluent if and only if $\Re$ is locally cyclically
confluent.
\begin{ex}\label{ex_not_cyc_confluent}
In \cite{hermiller+meier},  Hermiller and Meier construct a finite and complete rewriting system for the group  $\operatorname{Gp} \langle a,b \mid aba=bab \rangle$, using another set of generators. For the monoid with the same presentation,  the  set of generators is: $\{a,b,\underline{ab},\underline{ba},\Delta=\underline{aba}\}$, where the underlining of a sequence of letters means that it is a generator in the new generating set.
The complete and finite rewriting system is $\Re=\{ ab \rightarrow \underline{ab},ba\rightarrow\underline{ba},
a\underline{ba}\rightarrow \Delta, \underline{ab}a\rightarrow \Delta, b\underline{ab}\rightarrow \Delta, \underline{ab}\,\underline{ab}\rightarrow a \Delta, \underline{ba}b\rightarrow \Delta,
\underline{ba}\,\underline{ba}\rightarrow b\Delta, \Delta a\rightarrow b\Delta, \Delta b\rightarrow a\Delta, \Delta \underline{ab}\rightarrow \underline{ba}\Delta,\Delta\underline{ba}\rightarrow \underline{ab}\Delta
\}$. Let consider  the word $ab$, then  $ab\rightarrow \underline{ab}$ and $ab\circlearrowleft^{1}ba\rightarrow \underline{ba}$. That is, $ab\looparrowright \underline{ab}$ and $ab\looparrowright \underline{ba}$, where both $\underline{ab}$ and $\underline{ba}$ are cyclically irreducible, so $\Re$ is not cyclically confluent (nor locally cyclically confluent).
\end{ex}
\section{The relation $\looparrowright$ and the conjugacy problems}
\hspace{0.5cm}We denote by
$u\equiv_{M} v$ the following equivalence relation: there are words $x,y$ in $\Sigma^{*}$ such that $ux=_{M} xv$ and
$yu=_{M}vy$, that is  $u$ and $v$ are left and right conjugates.
We describe the connection between the relations  $\looparrowright$, $\equiv$ and the transposition problem.

 \begin{prop} \label{prop_samerho_conj}
  Let $M$ denote the  finitely  presented monoid  $\operatorname{Mon} \langle \Sigma\mid R \rangle$ and  let $\Re$ be a
complete rewriting system for $M$. Let $u$ and $v$ be in $\Sigma^{*}$.\\
 (i) If $u\looparrowright^{*}v$, then the pair $(u$, $v)$  is in the transitive closure of the transposition relation
and therefore $u \equiv_{M} v$.\\
 (ii)  If $\rho(u) \bumpeq \rho(v)$, then $u \equiv_{M} v$ (whenever $\rho(u)$ and $\rho(v)$ exist).
\end{prop}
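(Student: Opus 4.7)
The plan is to observe that a single step of $\looparrowright$ factors through at most two transpositions, so that $\looparrowright^{*} \subseteq \operatorname{Trans^{*}}$, and then to package this with the elementary fact that cyclic conjugates in the free monoid are transposes.

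For part $(i)$, by induction on the length of the $\looparrowright^{*}$-chain it suffices to handle a single step $u \looparrowright v$. Unpacking the definition, I would write $u = \alpha\beta$ in $\Sigma^{*}$ with $\widetilde{u} = \beta\alpha$ for the cyclic conjugate realising $u \circlearrowleft^{i} \widetilde{u}$, and then $\widetilde{u} = \gamma l \delta$, $v = \gamma r \delta$ for some rule $l \rightarrow r$ of $\Re$. The first factorisation places $(u, \widetilde{u})$ in $\operatorname{Trans}$ via the split $(x,y) = (\alpha, \beta)$, since $u = xy$ and $\widetilde{u} = yx$ already in $\Sigma^{*}$. For the second, one only needs that $\widetilde{u} =_{M} v$; any pair of $M$-equal words lies in $\operatorname{Trans}$ by taking $(x,y) = (\widetilde{u}, 1)$, giving $(\widetilde{u}, v) \in \operatorname{Trans}$. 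Hence $(u, v) \in \operatorname{Trans}^{2} \subseteq \operatorname{Trans^{*}}$. The final containment $\operatorname{Trans^{*}} \subseteq \equiv_{M}$ follows from the introductory inclusion $\operatorname{Trans} \subseteq \operatorname{LConj}, \operatorname{RConj}$ together with the transitivity of these two relations, which upgrades to $\operatorname{Trans^{*}} \subseteq \operatorname{LConj} \cap \operatorname{RConj} = \equiv_{M}$.

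For part $(ii)$, I would feed $u \looparrowright^{*} \rho(u)$ and $v \looparrowright^{*} \rho(v)$ into part $(i)$, obtaining $u \equiv_{M} \rho(u)$ and $v \equiv_{M} \rho(v)$. The hypothesis $\rho(u) \bumpeq \rho(v)$ exhibits $\rho(v)$ as a cyclic rearrangement of $\rho(u)$ in $\Sigma^{*}$, which is a single transposition and so $\rho(u) \equiv_{M} \rho(v)$. Since $\equiv_{M}$ is an equivalence relation, chaining these three gives $u \equiv_{M} v$.

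I do not foresee any real obstacle. The only point that warrants a moment's thought is that a single step of $\looparrowright$ cannot in general be realised as a single transposition, because the rewriting step $\widetilde{u} \rightarrow v$ only yields $\widetilde{u} =_{M} v$ rather than an honest factorisation; this is precisely why $\operatorname{Trans^{*}}$, and not $\operatorname{Trans}$ itself, is the natural middle relation.
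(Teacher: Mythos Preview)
Your proof is correct and follows essentially the same route as the paper. The one point of divergence is your closing remark: in fact a single step $u\circlearrowleft^{i}\widetilde{u}\rightarrow v$ \emph{does} yield a single transposition, because $\operatorname{Trans}$ only asks for $u=_{M}xy$ and $v=_{M}yx$, not for literal factorisations in $\Sigma^{*}$; writing $u=\alpha\beta$ and $\widetilde{u}=\beta\alpha$ and using $v=_{M}\widetilde{u}$ gives $u=_{M}\alpha\beta$, $v=_{M}\beta\alpha$ directly, which is how the paper argues.
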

\begin{proof}
$(i)$ If the sequence of cyclical reductions has the following form: $u\circlearrowleft^{i} \widetilde{u}
\rightarrow^{*}v$, then  $u$ and $v$ are transposed. Otherwise,
  if  $u=u_{1}\circlearrowleft^{i} \widetilde{u} \rightarrow^{*}u_{2}\circlearrowleft^{i} \widetilde{u_{2}}
\rightarrow^{*}u_{3}... \rightarrow^{*}u_{k}=v$, then each pair $(u_{i},$ $u_{i+1})$ is transposed. So,  the pair
$(u$, $v)$  is in the transitive closure of the transposition relation and therefore $u \equiv_{M} v$.   $(ii)$ From $(i)$,  $u \equiv_{M} \rho(u)$ and $v \equiv_{M} \rho(v)$, so   $u \equiv_{M} v$, since $\rho(u) \bumpeq
\rho(v)$ and  $\equiv_{M}$ is an equivalence relation.
\end{proof}
The converse of $(ii)$ is not true in general, namely  $u \equiv_{M} v$ does not imply that $\rho(u) \bumpeq \rho(v)$.
 Let  $\Re=\{ bab\rightarrow aba,ba^{n}ba\rightarrow aba^{2}b^{n-1}, n \geq
2\}$. Then $\Re$ is a complete and infinite  rewriting system for the braid monoid presented by
$\operatorname{Mon}\langle a,b \mid aba=bab \rangle$.
It holds that $a\equiv_{M}b$, since $a(aba)=_{M}(aba)b$ and
$(aba)a=_{M}b(aba)$, but $\rho(a)=a$ and $\rho(b)=b$ and they are not cyclic conjugates.
This example is due to Patrick Dehornoy.

\begin{lem}\label{lem_equal_same_cycl}
Let $\Re$ be a complete and cyclically complete rewriting system for $M$. Let $u$ and $v$ be words in $\Sigma^{*}$. If
$u=_{M}v$, then $\rho(u)\bumpeq \rho(v)$.
\end{lem}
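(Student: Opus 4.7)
The plan is to combine the two completeness hypotheses: ordinary completeness of $\Re$ pins $u$ and $v$ to the same $\rightarrow$-normal form, and then cyclical completeness pins that common normal form to a unique cyclically irreducible form (up to $\bumpeq$), which must therefore coincide with both $\rho(u)$ and $\rho(v)$.

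First I would use the fact that $\Re$ is complete for $M$. Since $u =_M v$, the words $u$ and $v$ have the same unique $\rightarrow$-normal form; call it $w$. Thus $u \rightarrow^{*} w$ and $v \rightarrow^{*} w$.

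Next I would observe the key embedding: every single step $x \rightarrow y$ is also a cyclical reduction $x \looparrowright y$, obtained from the definition by taking $i=0$ so that $x \circlearrowleft^{0} x \rightarrow y$. Iterating, we get $u \looparrowright^{*} w$ and $v \looparrowright^{*} w$. Now apply cyclical completeness to $w$: it has a cyclically irreducible form $\rho(w)$, so $w \looparrowright^{*} \rho(w)$. Concatenating the sequences of cyclical reductions gives $u \looparrowright^{*} \rho(w)$ and $v \looparrowright^{*} \rho(w)$, so $\rho(w)$ is a cyclically irreducible form of both $u$ and $v$.

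Finally, invoke the uniqueness statement recorded just before the lemma: if $\looparrowright$ is complete, then the cyclically irreducible form of any word is unique up to $\bumpeq$. Hence $\rho(u) \bumpeq \rho(w)$ and $\rho(v) \bumpeq \rho(w)$, and since $\bumpeq$ is transitive we conclude $\rho(u) \bumpeq \rho(v)$.

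I do not anticipate a real obstacle here; the only point that requires care is the trivial-looking but crucial remark that $\rightarrow$-reductions are a special case of $\looparrowright$-reductions (the $i=0$ case), which is what allows us to bridge the two notions of completeness and chain the reductions together.
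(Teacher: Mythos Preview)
Your proof is correct and follows essentially the same route as the paper: reduce $u$ and $v$ to their common $\rightarrow$-normal form $w$ via ordinary completeness, pass to a cyclically irreducible form of $w$, and then invoke cyclical confluence/uniqueness to conclude that $\rho(u)\bumpeq\rho(w)\bumpeq\rho(v)$. The only difference is cosmetic---you make the $i=0$ embedding of $\rightarrow$ into $\looparrowright$ explicit, whereas the paper uses it tacitly.
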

\begin{proof}
Assume  that $u \looparrowright^{*} z$
and $v \looparrowright^{*} z'$, where $z,z'$ are cyclically irreducible. We show that $z\bumpeq z'$.
Since $\Re$ is a complete  rewriting system, equivalent words (modulo $\Re$) reduce to the same normal form. Here
$u=_{M}v$, so there is a unique irreducible word $w$ such
 that $u \rightarrow ^{*} w$ and $v \rightarrow ^{*} w$. \\
  We have the following diagram:
 $\begin{array}{cccccccc}
 u & \looparrowright^{*}& z\\
 &\searrow^{*} \\
 && w \\
 & \nearrow^{*} \\
v & \looparrowright^{*}& z'
  \end{array}$\\
  Assume that $w \looparrowright^{*} z''$, so $u \looparrowright^{*} z''$ and $v \looparrowright^{*} z''$.
  But $u \looparrowright^{*} z$ and $v \looparrowright^{*} z'$ and $\Re$ is cyclically complete, so $z\bumpeq
z''\bumpeq z'$.
     \end{proof}
\begin{thm}
Let $\Re$ be a complete and cyclically
complete rewriting system for $M$. Let $u$ and $v$ be words in
$\Sigma^{*}$. \\
$(i)$ If $u$ and $v$ are transposed, then $\rho(u) \bumpeq \rho(v)$.\\
$(ii)$ If $\rho(u) \bumpeq \rho(v)$, then $u \equiv_{M} v$.
\end{thm}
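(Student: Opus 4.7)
The plan is to handle the two parts separately, with (ii) following immediately and (i) reducing to an application of Lemma \ref{lem_equal_same_cycl} combined with a routine observation about cyclic conjugates in the free monoid.

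For part (ii), there is essentially nothing new to do: this is exactly the content of Proposition \ref{prop_samerho_conj}(ii), which already establishes that $\rho(u)\bumpeq\rho(v)$ implies $u\equiv_M v$, so I would simply cite it.

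For part (i), the hypothesis that $u$ and $v$ are transposed gives words $x,y\in\Sigma^{*}$ with $u=_M xy$ and $v=_M yx$. Since $\Re$ is both complete and cyclically complete, I would apply Lemma \ref{lem_equal_same_cycl} twice to obtain $\rho(u)\bumpeq\rho(xy)$ and $\rho(v)\bumpeq\rho(yx)$. Now $xy$ and $yx$ are cyclic conjugates in the free monoid $\Sigma^{*}$, i.e.\ $xy\bumpeq yx$, so by the remark preceding Example \ref{ex_not_cyc_confluent} (that cyclic conjugates reduce to the same cyclically irreducible form up to $\bumpeq$) one has $\rho(xy)\bumpeq\rho(yx)$. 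Chaining the three $\bumpeq$ relations, which is a transitive relation on $\Sigma^{*}$, yields $\rho(u)\bumpeq\rho(v)$.

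There is no genuine obstacle: the real work has been done in Lemma \ref{lem_equal_same_cycl}, which required the diagram argument using that $\Re$ is complete (to merge $u$ and $v$ at a common normal form $w$) together with cyclic confluence (to identify the cyclically irreducible descendants of $u$, $v$ and $w$). Once that lemma is in hand, the theorem is a formal consequence; the only thing worth pointing out explicitly in the write-up is the invariance of $\rho$ under $\bumpeq$, since that is what converts the free-monoid identity $xy\bumpeq yx$ into an identity of cyclically irreducible forms.
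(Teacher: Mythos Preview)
Your proposal is correct and follows essentially the same argument as the paper: for (i) you unpack the transposition as $u=_M xy$, $v=_M yx$, apply Lemma~\ref{lem_equal_same_cycl} to each equality, and then use $xy\bumpeq yx$ together with cyclic completeness to conclude $\rho(xy)\bumpeq\rho(yx)$; for (ii) you cite Proposition~\ref{prop_samerho_conj}(ii). This is exactly the paper's proof, with your explicit reference to the $\bumpeq$-invariance of $\rho$ being the only cosmetic addition.
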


\begin{proof}
$(i)$ Since $u$ and $v$ are transposed, there are words $x$ and $y$ in
$\Sigma^{*}$ such that $u =_{M}xy$ and $v=_{M}yx$.
From lemma \ref{lem_equal_same_cycl}, $\rho(xy) \bumpeq \rho(u)$ and $\rho(yx) \bumpeq \rho(v)$. Moreover, since $xy
\bumpeq yx$ and $\Re$ is cyclically complete, $\rho(xy) \bumpeq \rho(yx)$, so $\rho(u) \bumpeq \rho(v)$.
$(ii)$ holds from Proposition \ref{prop_samerho_conj} in a more general context.
\end{proof}
\section{A local approach for $\looparrowright$: definition of $\operatorname{Allseq}$$(w)$}
\hspace{0.5cm}Given a complete rewriting system $\Re$, it is a very hard task to determine if $\Re$ is cyclically terminating, since
we have to check a potentially infinite number of words. So, we adopt a local approach, that is for each word $w$  in
$\Sigma^{*}$ we consider all the possible sequences of cyclical reductions that begin by each word from
$\{w_{1},..,w_{k}\}$, where  $w_{1}=w,w_{2},..,w_{k}$ are all the cyclic conjugates of $w$ in $\Sigma^{*}$. We call the
set of all these sequences \emph{$\operatorname{Allseq}$$(w)$}. We say that $\operatorname{Allseq}$$(w)$
\emph{terminates} if there is no infinite sequence of cyclical reductions in $\operatorname{Allseq}$$(w)$.
Clearly, $\Re$ is cyclically terminating if and only if $\operatorname{Allseq}$$(w)$ terminates for every $w$ in
$\Sigma^{*}$.
\begin{ex}\label{ex_braid_B3}
Let $\Re=\{ bab\rightarrow aba,ba^{n}ba\rightarrow aba^{2}b^{n-1},$ where $n\geq 2\}$.
Then $\Re$ is a complete and infinite  rewriting system for the braid monoid presented by
$\operatorname{Mon}\langle a,b \mid aba=bab \rangle$.
We denote by $w$ the word $ba^{2}ba$. We have the following infinite
sequence of cyclical reductions: $ba^{2}ba \rightarrow aba^{2}b
\circlearrowleft^{1} ba^{2}ba$, that is  $\operatorname{Allseq}$$(w)$ does not terminate.
This holds also for $ba^{n}ba$ for each $n \geq 2$.
\end{ex}
 We say that  $\operatorname{Allseq}$$(w)$ \emph{converges} if a unique cyclically irreducible form is achieved in
$\operatorname{Allseq}$$(w)$ (up to $\bumpeq$). Clearly, if $\Re$ is cyclically confluent then
$\operatorname{Allseq}$$(w)$ converges for every $w$ in $\Sigma^{*}$. The converse is true only if $\Re$ is cyclically
terminating. We illustrate this with an example.
\begin{ex}\label{ex_braid_uniquecyclicalform}
Let $\Re=\{ bab\rightarrow aba,ba^{n}ba\rightarrow aba^{2}b^{n-1},$ where $n\geq 2\}$ as in Ex. \ref{ex_braid_B3}.
 It holds that  $\operatorname{Allseq}$$(ba^{2}ba)$ does not terminate (see Ex. \ref{ex_braid_B3}). Yet,
$\operatorname{Allseq}$$(ba^{2}ba)$ converges, since  $a^{3}ba$ is the unique  cyclically irreducible form achieved in
$\operatorname{Allseq}$$(w)$. Indeed,  there is the following sequence of cyclical reductions:  $ba^{2}ba
\circlearrowleft^{1}a^{2}bab \rightarrow a^{3}ba$ and   all the cyclic conjugates of $w$ cyclically reduce to
$a^{3}ba$. So,  although $\operatorname{Allseq(ba^{2}ba)}$ does not terminate,  a unique cyclically irreducible form
$a^{3}ba$ is achieved.
\end{ex}
We find a condition that ensures that $\operatorname{Allseq}$$(w)$ converges, given that $\operatorname{Allseq}$$(w)$
terminates. Before we proceed, we give the following definition.
\begin{defn}
Let $\Re$ be a complete rewriting system and let $w$ be a word in $\Sigma^{*}$. Let $r_{1}$ and $r_{2}$ be rules in $\Re$ such that $r_{1}$ can be applied on a cyclic conjugate of $w$ and $r_{2}$ can be applied on another one.
We say that \emph{the triple $(w,r_{1},r_{2})$ is $\widetilde{c}$-defined} if there is a cyclic conjugate
$\widetilde{w}$ of $w$ such that both rules $r_{1}$ and $r_{2}$ can be applied on
$\widetilde{w}$. We allow an empty entry in a triple $(w,r_{1},r_{2})$, that is only $r_{1}$ or $r_{2}$ can be applied.
\end{defn}
\begin{ex}\label{ex_def_triple}
Let $\operatorname{Mon}\langle x,y,z \mid xy=yz=zx \rangle$, this is the Wirtinger presentation of the trefoil knot
group. Let $\Re= \{xy \rightarrow zx, yz \rightarrow zx, xz^{n}x  \rightarrow zxzy^{n-1}, n \geq 1\}$ be a complete and
infinite rewriting system for the monoid with this presentation (see \cite{chou1}). Let consider the word $yxz^{2}x$,
$yxz^{2}x$ and $xyxz^{2}$ are cyclic conjugates on which the rules $xz^{2}x \rightarrow zxzy$ and $xy \rightarrow zx$
can be applied respectively. We claim that the triple $(yxz^{2}x,xz^{2}x \rightarrow zxzy, xy \rightarrow zx)$ is
$\widetilde{c}$-defined.
Indeed, there is the cyclic conjugate $xz^{2}xy$ on which both the rules $xz^{2}x \rightarrow zxzy$ and $xy \rightarrow
zx$ can be applied. But, as an example the triple  $(xz^{2}xz^{3},xz^{2}x \rightarrow zxzy, xz^{3}x \rightarrow
zxzy^{2})$ is not $\widetilde{c}$-defined.
\end{ex}
In what follows, we show that if  $\operatorname{Allseq}$$(w)$ terminates and all the triples occurring there are
$\widetilde{c}$-defined, then  $\operatorname{Allseq}$$(w)$  converges. The following lemma is the induction basis of
the proof. For brevity, we write $u \looparrowright^{r_{1}} v_{1}$ for $u\circlearrowleft u_{1}  \rightarrow^{r_{1}}
v_{1}$, where $u_{1}  \rightarrow^{r_{1}} v_{1}$ means that $v_{1}$ is obtained from the application of the rule
$r_{1}$ on $u_{1}$.
\begin{lem}\label{lem_one_step}
Let the triple $(w,r_{1},r_{2})$ be $\widetilde{c}$-defined.
 Assume that  $w \looparrowright^{r_{1}} v_{1}$ and  $w \looparrowright^{r_{2}} v_{2}$, then there are cyclically
conjugates words $z_{1}$ and $z_{2}$ such that $v_{1} \looparrowright^{*} z_{1}$ and
$v_{2} \looparrowright^{*} z_{2}$.
\end{lem}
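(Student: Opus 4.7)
The plan is to use the common cyclic conjugate $\widetilde{w}$ provided by the $\widetilde{c}$-defined hypothesis as a single meeting point, and then to close the diagram using confluence of the underlying complete rewriting system $\Re$.

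First, fix $\widetilde{w}$, a cyclic conjugate of $w$ on which both $r_{1}$ and $r_{2}$ can be applied, and let $\widetilde{w}\rightarrow^{r_{1}}x_{1}$ and $\widetilde{w}\rightarrow^{r_{2}}x_{2}$ be the corresponding one-step reductions in $\Re$. Since $\Re$ is complete, it is confluent, so $x_{1}$ and $x_{2}$ admit a common descendant $y\in\Sigma^{*}$ with $x_{1}\rightarrow^{*}y$ and $x_{2}\rightarrow^{*}y$. If the occurrences of the left-hand sides $l_{1},l_{2}$ in $\widetilde{w}$ overlap, $y$ is produced by joining the corresponding critical pair; if they are disjoint, $y$ is obtained simply by applying the other rule after the first, in either order, since the rewrite sites do not interfere.

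Next, I relate the hypothesized $v_{1},v_{2}$ to $x_{1},x_{2}$. By definition $v_{1}$ arises from $w\circlearrowleft^{i_{1}}u_{1}\rightarrow^{r_{1}}v_{1}$ for some cyclic rotation $u_{1}$ of $w$ and some occurrence of $l_{1}$ in $u_{1}$, while $x_{1}$ is similarly produced from an occurrence of $l_{1}$ in $\widetilde{w}$. Viewing $w$ as a cyclic word, both $v_{1}$ and $x_{1}$ are obtained by replacing some cyclic occurrence of $l_{1}$ by the right-hand side of $r_{1}$: if the two chosen occurrences are disjoint on the necklace, a direct rewriting argument yields $v_{1}\bumpeq x_{1}$; if they overlap, the two rewrites form a critical pair in $\Re$, joinable by confluence, so that $v_{1}$ reduces (in the ordinary sense of $\rightarrow^{*}$) to a cyclic conjugate of $x_{1}$. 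The same analysis applies to $v_{2}$ and $x_{2}$.

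Setting $z_{1}=z_{2}=y$ (trivially cyclic conjugates), and noting that a cyclic rotation together with an ordinary reduction is a single $\looparrowright$ step, the chain $v_{1}\bumpeq x_{1}\rightarrow^{*}y$ gives $v_{1}\looparrowright^{*}y$, and likewise $v_{2}\looparrowright^{*}y$; in the degenerate case $v_{1}\bumpeq x_{1}=y=x_{2}\bumpeq v_{2}$ with no further $\Re$-reduction, one instead takes $z_{1}=v_{1}$ and $z_{2}=v_{2}$, which are themselves cyclic conjugates by transitivity of $\bumpeq$. The main obstacle is the second step above: since $l_{1}$ and $l_{2}$ may each have several cyclic occurrences in $w$, the abstract data $v_{1},v_{2}$ in the hypothesis may correspond to occurrences different from those used in $\widetilde{w}$, and rewrites at different overlapping occurrences can a priori yield non-cyclically-conjugate words. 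It is precisely the combination of the $\widetilde{c}$-defined hypothesis (which supplies a single conjugate hosting both left-hand sides) and the critical-pair joinability furnished by completeness of $\Re$ that lets us close any such discrepancy up to cyclic conjugation.
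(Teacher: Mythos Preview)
Your high-level plan---use the common conjugate $\widetilde{w}$ and close with confluence of $\Re$---is the right instinct and is also what the paper does, but the execution of your step~3 (relating $v_{i}$ to $x_{i}$) contains a genuine error, and this step is precisely where the paper proceeds differently.

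\medskip
\textbf{The error.} You claim that if the occurrence of $\ell_{1}$ in $u_{1}$ used to produce $v_{1}$ and the occurrence of $\ell_{1}$ in $\widetilde{w}$ used to produce $x_{1}$ are \emph{disjoint on the necklace}, then ``a direct rewriting argument yields $v_{1}\bumpeq x_{1}$''. This is false. Take $\ell_{1}=ab$, $m_{1}=c$, and let the cyclic word be $abdabe$. Applying $r_{1}$ at the first occurrence gives $cdabe$; applying it at the second gives $abdce$. These are not cyclic conjugates. What \emph{is} true is that when the two occurrences of $\ell_{1}$ coincide cyclically, $v_{1}\bumpeq x_{1}$; but nothing in the $\widetilde{c}$-defined hypothesis forces the occurrence of $\ell_{1}$ in $\widetilde{w}$ to be the same cyclic occurrence as the one used to form $v_{1}$. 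Your overlap case has a related problem: confluence of $\Re$ gives a common $\rightarrow^{*}$-descendant of the two results, not that one reduces to (a cyclic conjugate of) the other, so ``$v_{1}\rightarrow^{*}$ a cyclic conjugate of $x_{1}$'' does not follow. Thus your bridge $v_{i}\leadsto x_{i}\rightarrow^{*}y$ is not established, and the argument does not close.

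\medskip
\textbf{How the paper avoids this.} The paper never introduces the intermediaries $x_{1},x_{2}$. Instead it makes a case split on the \emph{relative position of $\ell_{1}$ and $\ell_{2}$ inside $\widetilde{w}$} (disjoint in either order, overlapping in either order, one included in the other), first disposing of the easy situation where both $\ell_{1}$ and $\ell_{2}$ already occur in $w_{1}$ and in $w_{2}$. Under the remaining assumption that $\ell_{2}$ does not occur in $w_{1}$ (and symmetrically), the rotation from $\widetilde{w}$ to $w_{1}$ must cut through $\ell_{2}$, which pins down the explicit form of $w_{1}$; from that explicit form the paper writes down, in each case, a concrete $\looparrowright^{*}$-sequence from $v_{1}$ and from $v_{2}$ to words that are visibly cyclic conjugates (using, in the overlap and inclusion cases, the joinability of the corresponding critical pair in $\Re$). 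The key point is that the paper reasons directly about $v_{1},v_{2}$ and the single pair $(\ell_{1},\ell_{2})$ in $\widetilde{w}$, rather than trying to compare two different applications of the \emph{same} rule $r_{1}$ at possibly different cyclic sites, which is where your argument breaks.
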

\begin{proof}
 We denote by $\ell_{1}$ and $\ell_{2}$ the left-hand sides of the rules $r_{1}$ and $r_{2}$ respectively and  by $m _{1}$
and $m_{2}$ the corresponding right-hand sides. Then $\ell_{1}$ has an occurrence in $w_{1}$ and $\ell_{2}$ has an
occurrence in $w_{2}$, where $w_{1} \bumpeq w_{2} \bumpeq w$.
 Since $(w,r_{1},r_{2})$ is $\widetilde{c}$-defined,  there exists  $\widetilde{w}$ such that  $\widetilde{w} \bumpeq
w$ and $\ell_{1}$ and $\ell_{2}$  both have an occurrence in $\widetilde{w}$. Then one of the following holds:\\
 $(i)$ $\widetilde{w}= x\ell_{1}y\ell_{2}s$, where $x,y,s$ are words.\\
  $(ii)$ $\widetilde{w}= x\ell_{2}y\ell_{1}s$, where $x,y,s$ are words.\\
 $(iii)$ $\widetilde{w}= x\ell_{1}\ell''_{2}y$, where $x,y$ are words, $\ell_{1}=\ell'_{1}\ell''_{1}$, $\ell_{2}=\ell'_{2}\ell''_{2}$ and
$\ell''_{1}=\ell'_{2}$.\\
 $(iv)$ $\widetilde{w}= x\ell_{2}\ell''_{1}y$, where $x,y$ are words, $\ell_{1}=\ell'_{1}\ell''_{1}$, $\ell_{2}=\ell'_{2}\ell''_{2}$ and
$\ell''_{2}=\ell'_{1}$.\\
$(v)$ $\widetilde{w}= x\ell_{2}y$, where $x,y$ are words, $\ell_{1}$ is a subword of $\ell_{2}$.\\
$(vi)$ $\widetilde{w}= x\ell_{1}y$, where $x,y$ are words, $\ell_{2}$ is a subword of $\ell_{1}$.\\
We check the cases $(i)$, $(iii)$ and $(v)$ and the other three cases are symmetric.
If both  $\ell_{1}$ and $\ell_{2}$ have an  occurrence in $w_{1}$ and in $w_{2}$, then obviously  there are words $z_{1}$ and
$z_{2}$ such that $v_{1} \looparrowright^{} z_{1}$ and
$v_{2} \looparrowright^{} z_{2}$, where  $z_{1} \bumpeq z_{2}$.  So, assume that
$\ell_{1}$ has no occurrence in $w_{2}$ and $\ell_{2}$ has no occurrence in $w_{1}$.\\
       Case $(i)$: Assume that $\widetilde{w}= x\ell_{1}y\ell_{2}s$. Then the words $w_{1}$ and $w_{2}$ have the
following form:
$w_{1}= \ell''_{2} sx\ell_{1}y\ell'_{2}$ and $w_{2}=\ell''_{1} y\ell_{2}sx\ell'_{1}$, where $\ell_{1}=\ell'_{1}\ell''_{1}$ and
$\ell_{2}=\ell'_{2}\ell''_{2}$. This is due to the fact that  $\ell_{1}$ has no occurrence in $w_{2}$ and $\ell_{2}$ has no occurrence
in $w_{1}$.  So, $w_{1}= \ell''_{2} sx\ell_{1}y\ell'_{2} \rightarrow \ell''_{2} sxm_{1}y\ell'_{2}\circlearrowleft^{i}
sxm_{1}y\ell'_{2}\ell''_{2}\rightarrow sxm_{1}ym_{2}$ and
$w_{2}=\ell''_{1} y\ell_{2}sx\ell'_{1} \rightarrow \ell''_{1} ym_{2}sx\ell'_{1} \circlearrowleft^{j}
ym_{2}sx\ell'_{1}\ell''_{1}\rightarrow ym_{2}sxm_{1}$.
We take then $z_{1}$ to be $sxm_{1}ym_{2}$ and $z_{2}$ to be $ym_{2}sxm_{1}$.\\
 Case $(iii)$: Assume that $\widetilde{w}= x\ell_{1}\ell''_{2}y$, where $\ell''_{1}=\ell'_{2}$.
    There is an overlap ambiguity between these rules which resolve, since $\Re$ is complete:\\
    $\begin{array}{cccccccccc}
    &&\ell'_{1}\ell''_{1}\ell''_{2}\\
    &\swarrow &&\searrow\\
    m_{1}\ell''_{2} &&&& \ell'_{1}m_{2}\\
    &\searrow^{*}&&\swarrow^{*}\\
    &&z
    \end{array}$ \\
       The words $w_{1}$ and $w_{2}$ have the following form:
$w_{1}= \ell''_{2} yx\ell_{1}$
 and $w_{2}=\ell_{2} yx\ell'_{1}$.
 So, $w_{1}= \ell''_{2} yx\ell_{1} \rightarrow \ell''_{2} yxm_{1} \circlearrowleft^{i} m_{1}\ell''_{2} yx \rightarrow^{*}
zyx$
 and $w_{2}=\ell_{2} yx\ell'_{1}\rightarrow m_{2} yx\ell'_{1} \circlearrowleft^{j} \ell'_{1}m_{2} yx \rightarrow^{*} zyx$.
So, we take $z_{1}$ and $z_{2}$ to be $zyx$.\\
Case $(v)$: Assume that $\widetilde{w}= x\ell_{2}y$, where $\ell_{2}=s\ell_{1}t$.
    There is an inclusion ambiguity between these rules which resolve, since $\Re$ is complete:\\
    $\begin{array}{cccccccccc}
    &&\ell_{2}=s\ell_{1}t\\
    &\swarrow &&\searrow\\
    sm_{1}t &&&& m_{2}\\
    &\searrow^{*}&&\swarrow^{*}\\
    &&z
    \end{array}$ \\
       The words $w_{1}$ and $w_{2}$ have the following form:
$w_{1}= tyxs\ell_{1}$
 and $w_{2}=\widetilde{w}= x\ell_{2}y$.
 So, $w_{1}= tyxs\ell_{1} \rightarrow tyxsm_{1} \circlearrowleft^{i} sm_{1}tyx \rightarrow^{*}
zyx$
 and $w_{2}= x\ell_{2}y\rightarrow xm_{2}y  \rightarrow^{*} xzy$.
So, we take $z_{1}$ to be $zyx$ and $z_{2}$ to be $xzy$.\\
 \end{proof}
\begin{prop}\label{prop_triple_defined_converge}
Let $w$ be a word in $\Sigma^{*}$ and assume that $\operatorname{Allseq}$$(w)$ terminates.
Assume all the triples in
$\operatorname{Allseq}$$(w)$ are $\widetilde{c}$-defined, then $\operatorname{Allseq}$$(w)$ converges.
\end{prop}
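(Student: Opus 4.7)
The proposition is a cyclic analogue of Newman's lemma: the hypotheses supply termination (of $\operatorname{Allseq}(w)$) and, via Lemma~\ref{lem_one_step}, a form of local confluence, since every triple occurring in $\operatorname{Allseq}(w)$ is $\widetilde{c}$-defined. My plan is therefore to run a Noetherian-induction argument in the spirit of Newman's classical proof, adapted so that ``joins'' are required only up to $\bumpeq$.

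Concretely, fix $w$ satisfying the hypotheses. Since $\operatorname{Allseq}(w)$ terminates, the relation $\looparrowright$ restricted to the words occurring in $\operatorname{Allseq}(w)$ is well-founded; here one uses the fact that $\operatorname{Allseq}$ is invariant under $\bumpeq$, because any sequence starting from a cyclic conjugate $\widetilde{u}$ can be reparameterized as a sequence starting from $u$ by adjusting the initial $\circlearrowleft$ offset. The claim I would prove by well-founded induction is: for every $w'$ appearing in $\operatorname{Allseq}(w)$, if $w'\looparrowright^{*}u_{1}$ and $w'\looparrowright^{*}u_{2}$, then there exist $z_{1}\bumpeq z_{2}$ with $u_{1}\looparrowright^{*}z_{1}$ and $u_{2}\looparrowright^{*}z_{2}$. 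The base case, when one of the two sequences is empty, is trivial. For the inductive step, write $w'\looparrowright w'_{1}\looparrowright^{*}u_{1}$ and $w'\looparrowright w'_{2}\looparrowright^{*}u_{2}$; Lemma~\ref{lem_one_step} produces $e_{1}\bumpeq e_{2}$ with $w'_{1}\looparrowright^{*}e_{1}$ and $w'_{2}\looparrowright^{*}e_{2}$. Applying the induction hypothesis at the strictly smaller element $w'_{1}$ to the pair $(u_{1},e_{1})$ joins them up to $\bumpeq$, and the same at $w'_{2}$ joins $(u_{2},e_{2})$. A third invocation at $e_{2}$, using $\bumpeq$-invariance to reinterpret a descendant of $e_{1}$ as a descendant of $e_{2}$, reconciles the two intermediate joining points, and chaining back yields the desired $z_{1},z_{2}$.

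Convergence of $\operatorname{Allseq}(w)$ then follows by applying the claim to any two cyclically irreducible forms $u_{1},u_{2}$ reached in $\operatorname{Allseq}(w)$: cyclic irreducibility forces $u_{1}\bumpeq z_{1}$ and $u_{2}\bumpeq z_{2}$, hence $u_{1}\bumpeq u_{2}$. The main obstacle, in contrast with standard Newman, is the bookkeeping of the cyclic-conjugacy equivalences: because both Lemma~\ref{lem_one_step} and each use of the induction hypothesis produce joining points only up to $\bumpeq$, a single application of the induction hypothesis is not enough to close the diagram, and one must also verify that the $\widetilde{c}$-definedness and termination assumptions transfer to the sub-$\operatorname{Allseq}$s invoked during the induction---which they do, since any sequence in $\operatorname{Allseq}(w'_{i})$ is, up to an initial $\circlearrowleft$, the tail of a sequence in $\operatorname{Allseq}(w)$.
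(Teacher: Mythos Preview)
Your proposal is correct and follows essentially the same approach as the paper: both argue that termination of $\operatorname{Allseq}(w)$ together with the local confluence supplied by Lemma~\ref{lem_one_step} yields confluence, via a Newman-style argument. The paper's proof simply invokes this implication in one line, whereas you spell out the Noetherian induction and track the $\bumpeq$-bookkeeping explicitly; this extra care is warranted (the joins are only up to $\bumpeq$, so classical Newman does not apply verbatim), and your observation that $\looparrowright^{*}$ is $\bumpeq$-invariant is exactly what makes the third invocation of the induction hypothesis go through.
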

\begin{proof}
We show  that the restriction of $\looparrowright$ to $\operatorname{Allseq}$$(w)$ is confluent. Since
$\operatorname{Allseq}$$(w)$ terminates, it is enough to show that the restriction of $\looparrowright$ to
$\operatorname{Allseq}$$(w)$ is locally confluent. All the  triples in $\operatorname{Allseq}$$(w)$ are
$\widetilde{c}$-defined, so from lemma \ref{lem_one_step} the restriction of $\looparrowright$ to
$\operatorname{Allseq}$$(w)$ is locally confluent.
\end{proof}
\section{A sufficient condition for the confluence of $\looparrowright$}
\hspace{0.5cm}We find a sufficient condition for the confluence of  $\looparrowright$, that is based on an  analysis of the rules in
$\Re$. For that, we translate the signification of a triple that is not $\widetilde{c}$-defined in terms of the rules
in $\Re$.
\begin{defn}
Let $w=x_{1}x_{2}x_{3}..x_{k}$ be a word, where the  $x_{i}$ are
generators for $1 \leq i \leq k$. Then we define the following
sets of words:\\
$\operatorname{pre}(w)=\{x_{1},x_{1}x_{2},x_{1}x_{2}x_{3},..,x_{1}x_{2}x_{3}..x_{k}\}$\\
$\operatorname{suf}(w)=\{x_{k},x_{k-1}x_{k},x_{k-2}x_{k-1}x_{k},..,x_{1}x_{2}x_{3}..x_{k}
\}$
 \end{defn}
  \begin{lem}\label{lem:no_Conj_pref_suff}
Let $(w,r_{1},r_{2})$ be a triple and let $\ell_{1}$ and $\ell_{2}$ denote the left-hand sides of the rules $r_{1}$ and
$r_{2}$, respectively. If $\operatorname{pre}(\ell_{2})\cap \operatorname{suf}(\ell_{1}) = \emptyset$ or
$\operatorname{pre}(\ell_{1})\cap
\operatorname{suf}(\ell_{2}) = \emptyset$, then the triple $(w,r_{1},r_{2})$ is $\widetilde{c}$-defined.
\end{lem}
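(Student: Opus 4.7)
The plan is to prove the contrapositive: if $(w,r_1,r_2)$ is \emph{not} $\widetilde{c}$-defined, then both $\operatorname{pre}(\ell_2)\cap\operatorname{suf}(\ell_1)$ and $\operatorname{pre}(\ell_1)\cap\operatorname{suf}(\ell_2)$ are nonempty. The geometric picture is that $w$ sits on a circle and $\ell_1,\ell_2$ each occur as an arc; failure of being $\widetilde{c}$-defined means no cut of the circle avoids both arcs, so the two arcs must overlap on both sides of their common region, and this forces nontrivial prefix-suffix matches between $\ell_1$ and $\ell_2$.

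Concretely, let $w_1$ and $w_2$ be cyclic conjugates of $w$ in which $\ell_1$ and $\ell_2$ respectively occur as factors (at the chosen positions). Rotate $w_1$ further so that this occurrence of $\ell_1$ sits at the right end, obtaining a cyclic conjugate $\widetilde{w}_1 = X\ell_1$. Since $(w,r_1,r_2)$ is not $\widetilde{c}$-defined, $\ell_2$ cannot occur as a factor of $\widetilde{w}_1$; yet an occurrence of $\ell_2$ exists in the cyclic word $w$, so it must wrap over the cut. In linear terms,
\[
\widetilde{w}_1 = \ell''_2\, Y\, \ell'_2, \qquad \ell_2 = \ell'_2\ell''_2, \qquad \ell'_2, \ell''_2 \neq 1.
\]
Now $\ell'_2$ is a suffix of $\widetilde{w}_1 = X\ell_1$, which yields two sub-cases. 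If $|\ell'_2|\leq|\ell_1|$, then $\ell'_2$ is a suffix of $\ell_1$ and, by construction, a prefix of $\ell_2$, so $\ell'_2 \in \operatorname{pre}(\ell_2)\cap\operatorname{suf}(\ell_1)$ and this intersection is nonempty. If instead $|\ell'_2|>|\ell_1|$, then $\ell_1$ is a suffix of $\ell'_2$, hence a subword of $\ell_2$; but then any cyclic conjugate containing $\ell_2$ as a factor (such as $w_2$) automatically contains $\ell_1$ as a factor, contradicting the assumption that the triple is not $\widetilde{c}$-defined. A symmetric argument, now putting $\ell_2$ at the right end of a cyclic conjugate, produces an element of $\operatorname{pre}(\ell_1)\cap\operatorname{suf}(\ell_2)$.

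The main subtle point is the case $|\ell'_2|>|\ell_1|$: there one does not directly read off a prefix-suffix overlap of the desired type, but instead the degenerate inclusion situation in which $\ell_1$ is a subword of $\ell_2$, which must be ruled out separately by observing that it is itself incompatible with failure of $\widetilde{c}$-definedness. Beyond this, the argument is essentially careful bookkeeping of how the two arcs can sit on the cyclic word, together with comparisons of suffix lengths.
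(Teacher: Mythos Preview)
Your proof is correct and rests on the same geometric idea as the paper's---viewing $w$ as a cyclic word and analyzing how the two occurrences of $\ell_1$ and $\ell_2$ sit relative to a cut. The difference is purely in strategy: the paper argues directly, choosing the cut so that $\widetilde{w}$ ends in $\ell_2$ (respectively $\ell_1$) and asserting that the hypothesis on prefixes and suffixes forces $\ell_1$ (respectively $\ell_2$) to occur linearly; you argue the contrapositive, putting $\ell_1$ at the right end and showing that if $\ell_2$ is forced to wrap, a nontrivial prefix--suffix overlap must arise. Your version is in fact more careful on one point: you explicitly dispose of the degenerate case $|\ell'_2|>|\ell_1|$ by observing that it would make $\ell_1$ a subword of $\ell_2$, which already contradicts failure of $\widetilde{c}$-definedness. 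The paper's direct argument glosses over the analogous edge case. Both routes yield the result with essentially the same work.
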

\begin{proof}
From the assumption, $\ell_{1}$ is a subword of $w_{1}$ and $\ell_{2}$ is a subword of $w_{2}$, where $w_{1}$ and $w_{2}$ are
cyclic conjugates of $w$.
We show that there exists a cyclic conjugate of $w$, $\widetilde{w}$, such that both $\ell_{1}$ and $\ell_{2}$ are subwords
of $\widetilde{w}$.
 If $\operatorname{pre}(\ell_{2})\cap \operatorname{suf}(\ell_{1}) = \emptyset$ and $\operatorname{pre}(\ell_{1})\cap
\operatorname{suf}(\ell_{2}) = \emptyset$ or if $\operatorname{pre}(\ell_{2})\cap \operatorname{suf}(\ell_{1}) \neq \emptyset$
and $\operatorname{pre}(\ell_{1})\cap \operatorname{suf}(\ell_{2})= \emptyset$, take  $\widetilde{w}$ to be  such that it  ends in $\ell_{2}$ and then $\ell_{1}$ will also have an occurrence in $\widetilde{w}$. If
$\operatorname{pre}(\ell_{2})\cap \operatorname{suf}(\ell_{1}) = \emptyset$ and $\operatorname{pre}(\ell_{1})\cap
\operatorname{suf}(\ell_{2}) \neq \emptyset$,  take   $\widetilde{w}$ to be  such that it
ends in $\ell_{1}$ and then $\ell_{2}$ will also have an occurrence in $\widetilde{w}$.
  \end{proof}
Note that if $\operatorname{pre}(\ell_{2})\cap \operatorname{suf}(\ell_{1}) \neq \emptyset$ and
$\operatorname{pre}(\ell_{1})\cap
\operatorname{suf}(\ell_{2}) \neq \emptyset$, then it does not necessarily imply
that all the triples of the form $(w,r_{1},r_{2})$ are not $\widetilde{c}$-defined.
Yet, as the following example illustrates it, there exists a triple $(w,r_{1},r_{2})$ that is not
$\widetilde{c}$-defined.
\begin{ex}\label{ex_wirt_tripledefined}
 Let $\Re= \{xy \rightarrow zx, yz \rightarrow zx, xz^{n}x  \rightarrow zxzy^{n-1}, n \geq 1\}$ from Ex.
\ref{ex_def_triple}. The rules $xz^{2}x\rightarrow zxzy$ and $xz^{3}x\rightarrow zxzy^{2}$ satisfy
 $\operatorname{pre}(xz^{2}x)\cap \operatorname{suf}(xz^{3}x)=\{x\}$ and $\operatorname{pre}(xz^{3}x)\cap
\operatorname{suf}(xz^{2}x)=\{x\}$. Yet, the triple $(xz^{2}xz^{3}x,\allowbreak xz^{2}x \rightarrow zxzy, xz^{3}x
\rightarrow zxzy^{2})$ is $\widetilde{c}$-defined, but the triple  $(xz^{2}xz^{3},xz^{2}x \rightarrow zxzy, xz^{3}x
\allowbreak \rightarrow zxzy^{2})$ is not $\widetilde{c}$-defined.
\end{ex}
\begin{lem}\label{lem_notdefined_form_cyclicaloverlap}
Let  $(w,r_{1},r_{2})$ be a triple and we denote by $\ell_{1}$ and $\ell_{2}$ the left-hand sides of the rules $r_{1}$ and
$r_{2}$, respectively.
Assume that $(w,r_{1},r_{2})$ is not $\widetilde{c}$-defined. Then  $\ell_{1}=xuy$ and  $\ell_{2}=yvx$, where $u,v$ are words
and $x,y$ are non-empty words.
\end{lem}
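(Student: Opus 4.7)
The plan is to apply the contrapositive of Lemma~\ref{lem:no_Conj_pref_suff} as the main lever. Since $(w,r_1,r_2)$ is not $\widetilde{c}$-defined, both $\operatorname{pre}(\ell_2) \cap \operatorname{suf}(\ell_1)$ and $\operatorname{pre}(\ell_1) \cap \operatorname{suf}(\ell_2)$ are non-empty, so I can pick non-empty words $y \in \operatorname{pre}(\ell_2) \cap \operatorname{suf}(\ell_1)$ and $x \in \operatorname{pre}(\ell_1) \cap \operatorname{suf}(\ell_2)$. By definition of $\operatorname{pre}$ and $\operatorname{suf}$, $\ell_1$ then begins with $x$ and ends with $y$, while $\ell_2$ begins with $y$ and ends with $x$. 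The task reduces to choosing $x$ and $y$ so that they do not overlap inside $\ell_1$ or inside $\ell_2$, i.e.\ so that $|x|+|y| \le \min(|\ell_1|,|\ell_2|)$, which is exactly what allows the factorisations $\ell_1 = xuy$ and $\ell_2 = yvx$ with $u,v$ genuine (possibly empty) words.

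For this length bound I would argue geometrically on the cyclic word $w$. Each occurrence of $\ell_1$ (resp.\ $\ell_2$) inside some cyclic conjugate marks an arc $A_1$ (resp.\ $A_2$) of length $|\ell_1|$ (resp.\ $|\ell_2|$) on $w$. A cyclic conjugate corresponds to a cut point between two adjacent letters; it has $\ell_1$ as a (non-wrapping) substring exactly when the cut is not interior to $A_1$, and similarly for $\ell_2$ and $A_2$. The non-$\widetilde{c}$-definedness assumption then says that for every such pair of arcs $(A_1, A_2)$, every cut point lies interior to $A_1$ or to $A_2$. A short case analysis rules out disjoint arcs (the shared boundary cut is interior to neither), one-sided overlap (the opposite boundary cut is uncovered) and nesting (the larger arc alone has only $|A_i|-1 < |w|$ interior cut points), so $A_1$ and $A_2$ must cyclically overlap on both sides; the two overlap regions are non-empty prefix/suffix matchings of $\ell_1$ and $\ell_2$, and I will take $x$ and $y$ to be precisely these.

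With this choice, the total-length identity $|A_1|+|A_2| = |w| + |x| + |y|$ for a both-sides cyclic overlap, combined with $|\ell_1|, |\ell_2| \le |w|$, yields $|u| = |w|-|\ell_2| \ge 0$ and $|v|=|w|-|\ell_1| \ge 0$, delivering the factorisations $\ell_1=xuy$ and $\ell_2=yvx$ with $x,y$ non-empty. The main obstacle is the middle step: Lemma~\ref{lem:no_Conj_pref_suff} alone produces the prefix/suffix matchings but offers no control on $|x|+|y|$, and even the shortest such matchings can overlap inside $\ell_1$ (for instance $\ell_1 = aba$, $\ell_2 = bab$ gives shortest $x = ab$, $y = ba$ with $|x|+|y|=4>3$). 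Exploiting the cyclic covering structure of $w$ is what forces the stronger length bound.
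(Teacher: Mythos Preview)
Your argument is correct and takes a genuinely different route from the paper. The paper starts the same way (via Lemma~\ref{lem:no_Conj_pref_suff} it picks arbitrary non-empty $x,y$ from the two prefix--suffix intersections) but then runs a case analysis on how $x$ and $y$ sit inside $\ell_1$ and $\ell_2$: case~(i) is the desired factorisation, while in cases~(ii)--(iv), where $x$ and $y$ would overlap inside $\ell_1$ or $\ell_2$, the paper exhibits by hand a word $\widetilde w$ containing both $\ell_1$ and $\ell_2$ and argues this is a cyclic conjugate of $w$, contradicting the hypothesis. Your approach instead fixes concrete occurrences of $\ell_1$ and $\ell_2$ as arcs $A_1,A_2$ on the cyclic word $w$ and reads ``not $\widetilde c$-defined'' as a covering condition on cut points; the forced two-sided overlap then \emph{produces} the specific $x,y$ together with the length identity $|x|+|y|=|\ell_1|+|\ell_2|-|w|\le\min(|\ell_1|,|\ell_2|)$, so the factorisations $\ell_1=xuy$, $\ell_2=yvx$ come for free. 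What this buys you is a uniform argument that makes explicit why the naive $x,y$ from Lemma~\ref{lem:no_Conj_pref_suff} can be too long (your $aba$/$bab$ example) while the arc-overlap choice cannot, and it avoids the ad hoc construction of $\widetilde w$ in each bad case. The one place worth writing out more carefully is the boundary situation $|\ell_i|=|w|$, where one arc is the whole circle and your ``nesting'' count $|A_i|-1<|w|$ needs the other arc to supply the missing cut; the length identity still holds there (it simply forces $u$ or $v$ to be empty), so the conclusion is unaffected.
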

\begin{proof}
The triple $(w,r_{1},r_{2})$ is not $\widetilde{c}$-defined, so from lemma \ref{lem:no_Conj_pref_suff},
 $\operatorname{pre}(\ell_{2})\cap \operatorname{suf}(\ell_{1}) \neq \emptyset$ and $\operatorname{pre}(\ell_{1})\cap
\operatorname{suf}(\ell_{2}) \neq \emptyset$. Assume that  $\operatorname{pre}(\ell_{2})\cap
\operatorname{suf}(\ell_{1})\supseteq \{x\}$ and $\operatorname{pre}(\ell_{1})\cap
\operatorname{suf}(\ell_{2}) \supseteq \{y\}$, where $x,y$ are non-empty words.
So, $\ell_{1}$ and $\ell_{2}$ have one of the following forms:\\
$(i)$ $\ell_{1}=xuy$ and  $\ell_{2}=yvx$, where $u,v$ are words.\\
$(ii)$ $\ell_{1}=xy$ and  $\ell_{2}=yx''$, where $x=x'x''$, $y=y'y''$ and $y''=x'$.\\
$(iii)$ $\ell_{1}=xy''$ and  $\ell_{2}=yx$, where $x=x'x''$, $y=y'y''$ and $x''=y'$.\\
$(iv)$ $\ell_{1}=xy''$ and  $\ell_{2}=yx''$, where $x=x'x''$, $y=y'y''$, and $y''=x'$, $x''=y'$.\\
We show that only case $(i)$ occurs, by showing that in the cases $(ii)$, $(iii)$ and $(iv)$  the triple
$(w,r_{1},r_{2})$ is  $\widetilde{c}$-defined. This is done by describing $\widetilde{w}$ on which both $r_{1}$ and
$r_{2}$ can be applied.
In any case, $w_{1}$ has to contain an occurrence of $\ell_{1}$ and $w_{2}$ has to contain an occurrence of $\ell_{2}$, where
$w_{1}$ and $w_{2}$ are cyclic conjugates of $w$. In case $(ii)$, $\ell_{1}=x'x''y'y''$ and  $\ell_{2}=y'y''x''$, where
$y''=x'$, so there exists  $\widetilde{w}=x'x''y'y''x''$ that contains an occurrence of $\ell_{1}$ and an
occurrence of $\ell_{2}$. Case $(iii)$ is symmetric to case $(ii)$ and we consider  case $(iv)$.
In case $(iv)$,  $\ell_{1}=x'x''y''$ and  $\ell_{2}=y'y''x''$, where  $y''=x'$ and  $x''=y'$, so using the same argument as
before, take $\widetilde{w}$ to be $x'x''y''x''$.
So, case $(i)$ occurs and $w$ has the form $xuyv$.
\end{proof}
\begin{defn}
We say that there is a \emph{cyclical overlap} between rules, if
there are two rules in $\Re$ of the form $xuy \rightarrow u'$ and
$yvx \rightarrow v'$, where $u',v'$ are words, $u,v,x,y$ are non-empty words and such that
$u'v$ and $v'u$ are not cyclic conjugates in $\Sigma^{*}$.
We say that there is a \emph{cyclical inclusion} if there are
two rules in $\Re$, $l\rightarrow v$ and $l' \rightarrow v'$,
where $l,v,l',v'$ are words and
 $l'$ is a cyclic conjugate of $l$ or $l'$ is a proper subword of a cyclic conjugate of
 $l$.  Whenever $l'$ is a cyclic conjugate of $l$, $v$ and $v'$ are not cyclic
 conjugates in $\Sigma^{*}$ and whenever $l'$ is a proper subword of $\ell_{1}$, where $\ell_{1}$ is a cyclic conjugate of
 $l$ (there is a non-empty word $u$ such that $\ell_{1}=ul'$),
then it holds that $l \rightarrow r$ and $l\circlearrowleft^{i}
\ell_{1}=ul' \rightarrow
uv'$ and $v$ and $uv'$ are not cyclic conjugates in $\Sigma^{*}$.
\end{defn}
In Example \ref{ex_wirt_tripledefined}, there is a cyclical overlap between the rules $xz^{2}x \rightarrow zxzy$ and
$xz^{3}x \rightarrow zxzy^{2}$. In Example \ref{ex_not_cyc_confluent}, there is a cyclical inclusion between the rules $ab \rightarrow \underline{ab}$ and $ba \rightarrow \underline{ba}$, since  $ab$ is a cyclic conjugate of $ba$. In Example \ref{ex_braid_B3}, there is a cyclical
inclusion of  the rule $bab\rightarrow aba$ in the rule $ba^{2}ba \rightarrow aba^{2}b$, since $bab$ is a subword of $baba^{2}$ (a cyclic conjugate of $ba^{2}ba$).
\begin{lem}\label{lem_pref_overlap}
Let $(w,r_{1},r_{2})$ be a triple  and let  $\ell_{1}$ and $\ell_{2}$ be the left-hand sides of the rules $r_{1}$ and
$r_{2}$, respectively.
Assume that the triple $(w,r_{1},r_{2})$ is not  $\widetilde{c}$-defined.
Then there is a cyclical overlap or a cyclical inclusion between $r_{1}$ and $r_{2}$.
\end{lem}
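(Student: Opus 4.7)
The plan is to leverage Lemma~\ref{lem_notdefined_form_cyclicaloverlap} directly: under the assumption that $(w,r_{1},r_{2})$ is not $\widetilde{c}$-defined, that lemma already supplies the decomposition $\ell_{1}=xuy$ and $\ell_{2}=yvx$ with $x$ and $y$ non-empty and $u,v$ possibly empty. From here the proof is a short case analysis on whether $u$ and $v$ are empty, matching each case to one of the two definitions.

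If $u$ and $v$ are both non-empty, then $(r_{1},r_{2})$ is literally of the form $xuy\to u'$ and $yvx\to v'$ required by the definition of cyclical overlap, with all four of $x,y,u,v$ non-empty. To see that this is a cyclical overlap in the strict sense of the definition, I would exhibit the witnessing cyclic word $xuyv$: its cyclic conjugate $xuyv$ admits $r_{1}$ yielding $u'v$, and its cyclic conjugate $yvxu$ admits $r_{2}$ yielding $v'u$. I would then argue that $u'v\not\bumpeq v'u$, since otherwise the two cyclical reductions from $w$ would already agree up to $\bumpeq$ and this would furnish a local resolution incompatible with the hypothesis that no single cyclic conjugate carries both rule sites.

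If at least one of $u,v$ is empty, say $u=1$, then $\ell_{1}=xy$ while $\ell_{2}=yvx$. When $v=1$ the words $\ell_{1}=xy$ and $\ell_{2}=yx$ are cyclic conjugates, matching the first clause of the cyclical inclusion definition. When $v\neq 1$ the cyclic conjugate $xyv$ of $\ell_{2}$ has $\ell_{1}=xy$ as a proper prefix, so $\ell_{1}$ is a proper subword of a cyclic conjugate of $\ell_{2}$, matching the second clause. The non-conjugacy clauses built into the inclusion definition (either $v$ and $v'$, or the corresponding longer words $v$ and $uv'$) I would verify by exactly the same kind of argument as in the overlap case. The remaining case $v=1$, $u\neq 1$ is handled by symmetry, inverting the roles of $r_{1}$ and $r_{2}$.

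The purely structural content of the lemma is immediate from Lemma~\ref{lem_notdefined_form_cyclicaloverlap} together with the case split above, so the main obstacle is checking the non-conjugacy clauses embedded in the definitions of cyclical overlap and cyclical inclusion. The verification amounts to tracing the outputs of $r_{1}$ and $r_{2}$ applied to the witnessing rotations of $w$ and showing that a conjugacy of those outputs would re-create, at the level of $\operatorname{Allseq}(w)$, the very kind of single-conjugate resolution that the failure of $\widetilde{c}$-definability is supposed to forbid, giving the required contradiction.
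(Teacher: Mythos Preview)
Your structural argument---invoking Lemma~\ref{lem_notdefined_form_cyclicaloverlap} to obtain $\ell_{1}=xuy$, $\ell_{2}=yvx$ with $x,y$ non-empty, and then splitting on whether $u$ and $v$ are empty---is exactly what the paper does, and this part is correct.

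The place where you go beyond the paper is in trying to verify the non-conjugacy clauses built into the definitions of cyclical overlap and cyclical inclusion (that $u'v\not\bumpeq v'u$, respectively that $v\not\bumpeq v'$ or $v\not\bumpeq uv'$). Here your argument has a genuine gap. You claim that if, say, $u'v\bumpeq v'u$, then ``the two cyclical reductions from $w$ would already agree up to $\bumpeq$ and this would furnish a local resolution incompatible with the hypothesis that no single cyclic conjugate carries both rule sites.'' But these are logically independent properties: whether $(w,r_{1},r_{2})$ is $\widetilde{c}$-defined depends only on the positions of $\ell_{1}$ and $\ell_{2}$ inside cyclic conjugates of $w$, not on the right-hand sides $u'$ and $v'$. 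Having the outputs be cyclic conjugates is perfectly compatible with there being no single rotation of $w$ carrying both left-hand sides; there is no contradiction to extract. So this part of your argument does not go through.

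It is worth noting that the paper's own proof simply does not address the non-conjugacy clauses at all: it stops once the structural shape $\ell_{1}=xuy$, $\ell_{2}=yvx$ has been exhibited and classified. In effect the lemma is being read as asserting only the structural form of a cyclical overlap or inclusion; the non-conjugacy clauses become relevant only downstream, in Proposition~\ref{prop_nooverlap_allseq_converges} and in Section~6, where one is already assuming non-convergence (so that the ambiguity cannot resolve trivially). You were right to notice that the definitions contain those extra clauses, but the hypothesis ``not $\widetilde{c}$-defined'' by itself is not strong enough to force them.
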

\begin{proof}
The triple $(w,r_{1},r_{2})$ is not $\widetilde{c}$-defined, so from lemma \ref{lem_notdefined_form_cyclicaloverlap},
$\ell_{1}=xuy$ and $\ell_{2}=yvx$, where $x,y$ are non-empty words and
$u,v$ are words. If $u$ and $v$ are both the empty word, then
$\ell_{1}$ and $\ell_{2}$ are cyclic conjugates, that is there is a
cyclical inclusion. If $u$ is the empty word but $v$ is not the
empty word, then $\ell_{1}=xy$ and $\ell_{2}=yvx$, which means that
$\ell_{1}$ is a subword of a cyclic conjugate of $\ell_{2}$. So, in this
case and in the symmetric case (that is  $v$ is the empty word but $u$
is not the empty word) there is a cyclical inclusion. If none of
$u$ and $v$ is the empty word, then $\ell_{1}=xuy$ and $\ell_{2}=yvx$,
that is there is a cyclical overlap between these two rules.
\end{proof}
\begin{prop}\label{prop_nooverlap_allseq_converges}
Let $w$ be a word in $\Sigma^{*}$
 and assume that $\operatorname{Allseq}$$(w)$ terminates.
  If there are no cyclical overlaps and cyclical inclusions in $\operatorname{Allseq}$$(w)$, then
$\operatorname{Allseq}$$(w)$ converges.
\end{prop}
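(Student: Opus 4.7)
The plan is to combine the two main previous results into a short syllogism. By Lemma \ref{lem_pref_overlap}, for any triple $(w',r_1,r_2)$ arising in $\operatorname{Allseq}(w)$, if the triple fails to be $\widetilde{c}$-defined then $r_1$ and $r_2$ must exhibit either a cyclical overlap or a cyclical inclusion. Taking the contrapositive: if no cyclical overlaps and no cyclical inclusions occur in $\operatorname{Allseq}(w)$, then every triple occurring in $\operatorname{Allseq}(w)$ is automatically $\widetilde{c}$-defined.

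Once that observation is in hand, the conclusion follows by applying Proposition \ref{prop_triple_defined_converge} directly: since $\operatorname{Allseq}(w)$ is assumed to terminate and every triple in it is $\widetilde{c}$-defined, $\operatorname{Allseq}(w)$ converges. So the proof is essentially one line once the two lemmas are cited in the correct order.

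The only subtlety I would be careful about is making sure the hypothesis really does rule out the bad triples at every step of every sequence in $\operatorname{Allseq}(w)$, not just at the initial word. This is purely a matter of quantification: the statement ``there are no cyclical overlaps and cyclical inclusions in $\operatorname{Allseq}(w)$'' has to be read as ``for every pair of rules $(r_1,r_2)$ actually used along some pair of reductions from some cyclic conjugate of some word appearing in $\operatorname{Allseq}(w)$, neither a cyclical overlap nor a cyclical inclusion holds.'' With this reading, every local fork $v_1 \looparrowleft^{r_1} w' \looparrowright^{r_2} v_2$ in $\operatorname{Allseq}(w)$ is covered by Lemma \ref{lem_pref_overlap}, and the argument goes through without extra work.

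I do not expect any genuine obstacle here; the real content of the proposition was already packaged into Lemma \ref{lem_one_step}, Proposition \ref{prop_triple_defined_converge}, and Lemma \ref{lem_pref_overlap}. The present statement is simply the syntactic-looking form of the sufficient condition, stated in terms of the rules of $\Re$ rather than in terms of $\widetilde{c}$-definedness of triples.
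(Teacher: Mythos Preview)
Your proposal is correct and follows essentially the same approach as the paper: the paper argues by contrapositive (if $\operatorname{Allseq}(w)$ does not converge, Proposition~\ref{prop_triple_defined_converge} yields a non-$\widetilde{c}$-defined triple, and Lemma~\ref{lem_pref_overlap} then forces a cyclical overlap or inclusion), while you apply the contrapositive of Lemma~\ref{lem_pref_overlap} first and then invoke Proposition~\ref{prop_triple_defined_converge} directly. The logical content is identical.
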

\begin{proof}
If $\operatorname{Allseq}$$(w)$ does not converge, then from Proposition \ref{prop_triple_defined_converge}, this
implies that
there is a triple $(w,r_{1},r_{2})$ in $\operatorname{Allseq}$$(w)$ that is not $\widetilde{c}-$defined.
 From lemma \ref{lem_pref_overlap}, this implies that there is a cyclical overlap or a cyclical inclusion in
$\operatorname{Allseq}$$(w)$.
\end{proof}
Note that the converse is not necessarily true, that is there may be a cyclical overlap or a cyclical inclusion in
$\operatorname{Allseq}$$(w)$ and yet a unique cyclically irreducible form is achieved in $\operatorname{Allseq}$$(w)$,
as in the following example.
\begin{ex}
Let $\Re=\{ bab\rightarrow aba,ba^{n}ba\rightarrow aba^{2}b^{n-1}, n \geq 2\}$.
Let $w=ba^{2}ba$, then $\operatorname{Allseq}$$(w)$ does not terminate (see Ex. \ref{ex_braid_B3}). The triple
$(w,bab\rightarrow aba, ba^{2}ba \rightarrow aba^{2}b )$ is not $\widetilde{c}-$defined since there is a cyclical
inclusion of  the rule $bab\rightarrow aba$ in the rule $ba^{2}ba \rightarrow aba^{2}b$. Nevertheless, $w$ has a unique
cyclically irreducible form $ba^{4}$ (up to $\bumpeq$): $ba^{2}ba\rightarrow aba^{2}b \circlearrowleft^{4}
baba^{2}\rightarrow abaa^{2}$.
In fact, each $w=ba^{n}ba$ where $n \geq 2$ has a unique cyclically irreducible form $ba^{n+2}$ (up to $\bumpeq$).
\end{ex}
\begin{thm}\label{theo:conf_over_inc_amb_resolv}
Let $\Re$ be a complete rewriting system that is
cyclically terminating. If there are no rules in $\Re$ with cyclical overlaps or cyclical inclusions,
then $\Re$ is cyclically confluent.
\end{thm}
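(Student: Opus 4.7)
The plan is to reduce the theorem to local cyclic confluence and then apply the machinery assembled in Sections 4 and 5. Recall the remark after Definition 2.3 that given that $\looparrowright$ is terminating, $\Re$ is cyclically confluent if and only if $\Re$ is locally cyclically confluent. Since cyclic termination is given as a hypothesis, it therefore suffices to verify local cyclic confluence: that for every $w,u,v \in \Sigma^{*}$ with $w \looparrowright u$ and $w \looparrowright v$ there exist cyclic conjugates $z_{1} \bumpeq z_{2}$ with $u \looparrowright^{*} z_{1}$ and $v \looparrowright^{*} z_{2}$.

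To check local confluence, I would unpack what $w \looparrowright u$ and $w \looparrowright v$ mean. Each is obtained by cycling $w$ to some conjugate and applying a single rule, say $r_{1}$ and $r_{2}$ respectively. This data is exactly a triple $(w,r_{1},r_{2})$ in the sense of Section 4. Now I would invoke the contrapositive of Lemma \ref{lem_pref_overlap}: since by hypothesis $\Re$ contains no pair of rules exhibiting a cyclical overlap or cyclical inclusion, the triple $(w,r_{1},r_{2})$ cannot fail to be $\widetilde{c}$-defined, so it \emph{is} $\widetilde{c}$-defined. At this point Lemma \ref{lem_one_step} applies directly and produces the required cyclically conjugate words $z_{1}$ and $z_{2}$ with $u \looparrowright^{*} z_{1}$ and $v \looparrowright^{*} z_{2}$. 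This gives local cyclic confluence, and combined with cyclic termination one concludes cyclic confluence, completing the proof.

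There is essentially no hard step; all the real work was done earlier, in Lemma \ref{lem_one_step} (the six-case analysis resolving each local peak using completeness of $\Re$) and in Lemma \ref{lem_pref_overlap} (classifying the shape of a non-$\widetilde{c}$-defined triple as an overlap or inclusion). The one subtlety worth flagging is logical rather than technical: the hypothesis forbids cyclical overlaps and inclusions \emph{between rules of $\Re$}, which is a statement about $\Re$ as a whole, but Lemma \ref{lem_pref_overlap} is applied to a specific triple $(w,r_{1},r_{2})$; this is fine because the overlap/inclusion conditions are properties of the pair $(r_{1},r_{2})$ alone and do not depend on $w$. Alternatively, one could argue via Proposition \ref{prop_nooverlap_allseq_converges} applied to every $w \in \Sigma^{*}$, concluding that each $w$ has a unique cyclically irreducible form $\rho(w)$ (up to $\bumpeq$), and then deducing cyclic confluence from uniqueness plus termination; this route is slightly longer but equivalent.
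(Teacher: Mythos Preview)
Your proof is correct and essentially matches the paper's argument. The paper's proof invokes the packaged Proposition~\ref{prop_nooverlap_allseq_converges} (that $\operatorname{Allseq}(w)$ converges for every $w$) and then passes from universal convergence of $\operatorname{Allseq}(w)$ to cyclic confluence via cyclic termination---precisely the ``alternative route'' you mention at the end; your main route simply unwinds that proposition into its constituent Lemmas~\ref{lem_one_step} and~\ref{lem_pref_overlap} and checks local cyclic confluence directly, which is the same argument with one layer of packaging removed.
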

\begin{proof}
 From Proposition \ref{prop_nooverlap_allseq_converges}, if there are no rules in $\Re$ with cyclical overlaps or
cyclical inclusions then $\operatorname{Allseq}$$(w)$ converges for all $w$.
 Since $\Re$ is cyclically terminating, $\Re$ is cyclically confluent if and only if
$\operatorname{Allseq}$$(w)$ converges for all $w$, so the proof is done.
\end{proof}
\section{The algorithm of cyclical completion}
\hspace{0.5cm}Knuth and Bendix have elaborated an
algorithm which for a given finite and terminating  rewriting system $\Re$, tests its completeness and if $\Re$ is not
complete
then new rules are added to complete it. This procedure can have one of three outcomes: success in finding a finite and
complete system, failure in finding anything or looping and and generating an infinite number of rules (see
\cite{handbook}). Instead of testing the
confluence of $\Re$, the algorithm tests the locally confluence of
$\Re$, since for a terminating  rewriting system  locally
confluence and confluence are equivalent. Two  rewriting systems
$\Re$ and $\Re'$ are said to be \emph{equivalent} if :\
$w_{1}\leftrightarrow^{*}w_{2}$ modulo $\Re$ if and only if
$w_{1}\leftrightarrow^{*}w_{2}$ modulo $\Re'$. So, by applying the
Knuth-Bendix algorithm on a terminating  rewriting system $\Re$ a
complete  rewriting system $\Re'$ that is equivalent to $\Re$ can
be found (if the algorithm does not fail). Our aim in this section is to provide an algorithm of
cyclical completion which is much inspired by  the Knuth-Bendix
algorithm of completion. 

Let $\Re$ be a complete and cyclically terminating  rewriting system, we assume that $\Re$ is finite.
 From Theorem \ref{theo:conf_over_inc_amb_resolv},
 if there are no cyclical overlaps or cyclical inclusions then $\Re$ is cyclically
confluent. Nevertheless, if there is a cyclical overlap or a cyclical inclusion, we define when it resolves in the
following way. We say that the cyclical overlap between the rules $xuy \rightarrow u'$ and
$yvx \rightarrow v'$, where $u,v,u',v'$ are words, $x,y$ are non-empty words \emph{resolves} if there exist cyclically
conjugate words $z$ and $z'$ such that $u'v\looparrowright^{*}z$ and
$uv'\looparrowright^{*}z'$.
If there is a  cyclical inclusion between the rules $l\rightarrow v$ and $l' \rightarrow v'$,
where $l,v,l',v'$ are words and
 $l'$ is a cyclic conjugate of $l$ or $l'$ is a proper subword of a cyclic conjugate of
 $l$, then we say that it resolves if there exist cyclically conjugate words $z$ and $z'$ such  that
$v\looparrowright^{*}z$ and
$v'\looparrowright^{*}z'$ in the first case or
$v\looparrowright^{*}z$ and $uv'\looparrowright^{*}z'$
in the second case ($z \bumpeq z'$).
\begin{ex}
We consider the complete and finite rewriting system from Ex. \ref{ex_not_cyc_confluent}. Since there is a cyclical inclusion between the rules $ab \rightarrow \underline{ab}$ and $ba \rightarrow \underline{ba}$, it holds that  $ab \looparrowright \underline{ab}$
and $ab \looparrowright \underline{ba}$, where $\underline{ab}$ and $\underline{ba}$ are cyclically irreducible. We can decide arbitrarily wether  $\underline{ab} \looparrowright^{+}\underline{ba}$ or $\underline{ba} \looparrowright^{+}\underline{ab}$, in any case this cyclical inclusion resolves.
\end{ex}

In the following, we describe the algorithm of cyclical completion in which we add some new cyclical reductions. We
denote by $\Re^{+}$ the  rewriting system with the added cyclical reductions and we add ``$+$'' in
$\looparrowright^{+}$ for each cyclical reduction that is added in the process of cyclical completion. We assume that
$\Re$ is a finite, complete and cyclically terminating  rewriting system. The algorithm is described in the
following.\\
$(i)$ If there are no cyclical overlaps or cyclical inclusions, then $\Re$ is cyclically
confluent, from Theorem \ref{theo:conf_over_inc_amb_resolv} and $\Re^{+}=\Re$.\\
$(ii)$ Assume there is a cyclical overlap or a cyclical inclusion in the word $w$:
$ w \looparrowright z_{1}$ and $ w \looparrowright z_{2}$.\\
 With no loss of generality, we can assume that  $z_{1}$ and $z_{2}$ are cyclically irreducible (since otherwise we can
first cyclically reduce them), then  decide $z_{1}\looparrowright^{+} z_{2}$ or $z_{2}\looparrowright^{+} z_{1}$.
     If at a former step, no $z_{i}\looparrowright^{+} u$ or $u\looparrowright^{+} z_{i}$ for $i=1,2$ was added, then
we can decide arbitrarily wether $z_{1}\looparrowright^{+} z_{2}$ or $z_{2}\looparrowright^{+} z_{1}$.
     As an example, if $z_{1}\looparrowright^{+} u$ was added, then we choose $z_{2}\looparrowright^{+} z_{1}$.
     
        The algorithm fails if the addition of a new cyclical reduction creates a contradiction:  assume  $z_{1}$ and
$z_{2}$ are cyclically irreducible and we need to add $z_{1}\looparrowright^{+} z_{2}$ or
$z_{2}\looparrowright^{+} z_{1}$ but   $z_{1}\looparrowright^{+} u$ and  $z_{2}\looparrowright^{+} v$ are
already in $\Re^{+}$. In the Knuth-Bendix algorithm of completion, the addition of the new rules may create
some additional overlap or inclusion ambiguities. We show in the following that this is not the case with the
algorithm of cyclical completion and this is due to the fact that the relation $\looparrowright$ is not
compatible with concatenation. From Proposition \ref{prop_samerho_conj}, if $u \looparrowright^{*} v$ then
$u\equiv_{M} v$. In the following lemma, we show that this holds also with  $\looparrowright^{+}$.
\begin{lem}\label{lem_new_cycl_rule_equiv}
Let $\Re$ be a complete and cyclically terminating rewriting system. We
assume that $\Re$ is finite.
Let $\Re^{+}$ be the cyclical rewriting system obtained from the application of the algorithm of cyclical completion on
$\Re$. If $u \looparrowright^{+} v$ then $u \equiv_{M} v$ modulo $\Re$.
\end{lem}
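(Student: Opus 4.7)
The plan is to argue by a double induction: an outer induction on the number of rules that the cyclical-completion algorithm has already added to $\Re^{+}$ (equivalently, on the algorithm's step index), and an inner induction on the length of a $\looparrowright^{+}$-derivation. The key observation is that every new rule $z_{1}\looparrowright^{+}z_{2}$ put into $\Re^{+}$ arises from a configuration $w\looparrowright z'_{1}$, $w\looparrowright z'_{2}$ coming from an unresolved cyclical overlap or cyclical inclusion, and Proposition \ref{prop_samerho_conj} already tells us that such $\looparrowright$-steps respect $\equiv_{M}$.

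More precisely, I first establish the following intermediate claim: for every $n$, every rule added at step $n$ of the algorithm satisfies $z_{1}\equiv_{M}z_{2}$. For $n=0$ the system $\Re^{+}$ equals $\Re$ and there is nothing to show. For the inductive step, suppose the rule $z_{1}\looparrowright^{+}z_{2}$ is about to be added because of a cyclical overlap/inclusion in some $w$ giving $w\looparrowright z'_{1}$ and $w\looparrowright z'_{2}$, and then $z'_{i}\looparrowright^{+\,*}z_{i}$ via rules present at this stage (this is the step "we can first cyclically reduce them" in $(ii)$). By Proposition \ref{prop_samerho_conj}, $w\equiv_{M}z'_{1}$ and $w\equiv_{M}z'_{2}$. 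By the outer induction hypothesis every rule already in $\Re^{+}$ preserves $\equiv_{M}$, so $z'_{i}\equiv_{M}z_{i}$ for $i=1,2$. Since $\equiv_{M}$ is an equivalence relation, $z_{1}\equiv_{M}w\equiv_{M}z_{2}$, which is exactly what is needed.

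Granted this, the lemma follows by a straightforward inner induction on the length of a derivation $u=u_{0}\looparrowright^{+}u_{1}\looparrowright^{+}\cdots\looparrowright^{+}u_{k}=v$. Each single step is either a $\looparrowright$-step (which involves a cyclic conjugation followed by application of an original rule of $\Re$, and therefore gives $u_{i}\equiv_{M}u_{i+1}$ by Proposition \ref{prop_samerho_conj}), or it is an added rule, which by the claim above also gives $u_{i}\equiv_{M}u_{i+1}$. Transitivity of $\equiv_{M}$ then yields $u\equiv_{M}v$.

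The main obstacle I foresee is the circularity lurking in the outer induction: when the algorithm needs to first cyclically reduce $z'_{1}$ and $z'_{2}$ before comparing them, those preliminary reductions may already involve previously added rules from $\Re^{+}$. The induction on the algorithm step is designed precisely to break this apparent circularity, since at the moment the new rule is introduced all the rules used to reduce $z'_{i}$ to $z_{i}$ belong to a strictly earlier stage and are thus covered by the induction hypothesis. Once this is articulated cleanly, the rest is formal; no use of confluence or of the particular way the algorithm chooses the direction $z_{1}\looparrowright^{+}z_{2}$ versus $z_{2}\looparrowright^{+}z_{1}$ is needed, because $\equiv_{M}$ is symmetric.
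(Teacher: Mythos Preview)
Your argument is correct and follows the same skeleton as the paper's proof: handle a single added step via Proposition~\ref{prop_samerho_conj} applied to the common ancestor $w$, then extend to longer derivations by transitivity of $\equiv_{M}$. The only difference is that the paper reads the algorithm so that the preliminary reductions of $z'_{1},z'_{2}$ to cyclically irreducible $z_{1},z_{2}$ are carried out with the original relation $\looparrowright$ only; consequently, for each added rule there already exists $w$ with $w\looparrowright^{*}z_{1}$ and $w\looparrowright^{*}z_{2}$ in $\Re$, and no outer induction on the stage of the algorithm is needed. Your additional induction on the stage handles the variant in which earlier $\looparrowright^{+}$-rules may participate in those preliminary reductions; this is harmless and makes the argument robust to either reading of the algorithm, at the cost of a slightly longer proof.
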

\begin{proof}
There are two cases to check: if $u \looparrowright^{+} v$ and if $u \looparrowright^{+}u_{2}
\looparrowright^{+}u_{3}.. \looparrowright^{+} v$.
 If $u \looparrowright^{+} v$, then from the algorithm of cyclical completion, there is a word $w$ such that $w
\looparrowright^{*} u$ and $w \looparrowright^{*} v$. From Proposition \ref{prop_samerho_conj}, this implies $w
\equiv_{M} u $ and $w \equiv_{M} v $ (modulo $\Re$),  so  $u \equiv_{M} v$ (modulo $\Re$). If $u
\looparrowright^{+}u_{2} \looparrowright^{+}u_{3}.. u_{k} \looparrowright^{+} v$, then  $u_{i} \equiv_{M} u_{i+1}$
(modulo $\Re$) from the first case, so $u \equiv_{M} v$ (modulo $\Re$).
\end{proof}

Given two complete and cyclically terminating rewriting systems $\Re$ and $\Re'$, we say that $\Re$ and $\Re'$
are \emph{cyclically equivalent} if the following condition holds: $u\equiv_{M} v$ modulo $\Re'$ if and only if
$u\equiv_{M} v$ modulo $\Re$. We show that the cyclical rewriting system  $\Re^{+}$ obtained  from the application of
the algorithm of cyclical completion on $\Re$ is cyclically equivalent to $\Re$.
\begin{lem}
Let $\Re$ be a complete and cyclically terminating  rewriting system, we assume that $\Re$ is finite.
Let $\Re^{+}$ be the cyclical  rewriting system obtained from the application of the algorithm of cyclical completion
on $\Re$. Then $\Re^{+}$ and $\Re$ are cyclically equivalent, that is $u\equiv_{M} v$ modulo $\Re^{+}$ if and only if
$u\equiv_{M} v$ modulo $\Re$.
\end{lem}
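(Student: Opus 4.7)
My plan is to establish the two directions of the biconditional separately. The backward direction, namely that $u \equiv_M v$ modulo $\Re$ implies $u \equiv_M v$ modulo $\Re^{+}$, is essentially bookkeeping: every rewriting rule of $\Re$ is still available in $\Re^{+}$, so any pair of conjugators $(x, y)$ witnessing $ux =_M xv$ and $yu =_M vy$ modulo $\Re$ continues to be a valid witness after the new cyclical reductions are adjoined. Hence every conjugacy modulo $\Re$ is automatically a conjugacy modulo $\Re^{+}$.

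For the forward direction, I would take conjugators $(x, y)$ realising $u \equiv_M v$ modulo $\Re^{+}$ and trace through each elementary step used in the $\Re^{+}$-derivations of $ux \sim xv$ and $yu \sim vy$. Each such step is either a genuine rewriting step from a rule of $\Re$, which preserves $=_M$ in the monoid defined by $\Re$, or else an application of one of the newly added cyclical reductions $a \looparrowright^{+} b$. By Lemma~\ref{lem_new_cycl_rule_equiv}, the latter already satisfies $a \equiv_M b$ modulo $\Re$, so every individual step of the derivation stays inside a single $\equiv_M$-class of the monoid defined by $\Re$. Concatenating all such steps and invoking transitivity of $\equiv_M$ then yields $u \equiv_M v$ modulo $\Re$, closing the equivalence.

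The key subtlety that I expect to require the most care is that $\looparrowright^{+}$ is not compatible with concatenation, so the newly adjoined cyclical reductions cannot be lumped together with the rewriting rules of $\Re$ into a single congruence on $\Sigma^{*}$. The argument must therefore be carried out at the level of the equivalence relation $\equiv_M$ rather than $=_M$, because an elementary $\looparrowright^{+}$ step only preserves the conjugacy class, not the underlying monoid element. Once this shift in viewpoint is made explicit, Lemma~\ref{lem_new_cycl_rule_equiv} combined with the transitivity and symmetry of $\equiv_M$ handles the chaining with no further obstacle, and the two implications combine to give the asserted cyclical equivalence of $\Re$ and $\Re^{+}$.
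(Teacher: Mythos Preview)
Your proposal misidentifies where the content of the lemma lies. The relation $\equiv_M$ is defined purely in terms of the monoid equality $=_M$: there must exist $x,y$ with $ux =_M xv$ and $yu =_M vy$. In turn, $=_M$ is the congruence on $\Sigma^{*}$ generated by the \emph{linear} rewriting rules. The algorithm of cyclical completion never adds a linear rule; it only adjoins cyclical reductions $z_1 \looparrowright^{+} z_2$, which, as you yourself observe, are not compatible with concatenation and hence do not enter into the congruence $=_M$. Therefore $=_M$ modulo $\Re^{+}$ is literally the same relation as $=_M$ modulo $\Re$, and the equivalence of $\equiv_M$ in the two systems is immediate. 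This is exactly the paper's one-line argument: since the linear rules in $\Re^{+}$ are those of $\Re$, the existence of conjugators $x,y$ is the same condition in both systems.

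Your forward direction instead imagines a ``$\Re^{+}$-derivation'' of $ux =_M xv$ that might contain $\looparrowright^{+}$ steps, and then invokes Lemma~\ref{lem_new_cycl_rule_equiv} to control those steps. But no $\looparrowright^{+}$ step can occur in a derivation of an \emph{equality} in $M$: such a step applies only to a whole word and only certifies conjugacy, not equality. If you allowed them anyway, your chaining would at best yield $ux \equiv_M xv$ modulo $\Re$, and from that one cannot conclude $u \equiv_M v$ (conjugacy of $ux$ and $xv$ does not cancel the $x$). So the route through Lemma~\ref{lem_new_cycl_rule_equiv} is both unnecessary and, taken on its own terms, insufficient; the correct and much shorter argument is simply that the linear part of the system is unchanged.
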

\begin{proof}
  It holds that $u\equiv_{M} v$ modulo $\Re$ if and only if there are words $x,y$ in $\Sigma^{*}$ such that $ux=_{M}xv$
and $yu=_{M}vy$.  Since the (linear) rules in $\Re^{+}$ are the same as those in $\Re$, this holds if and only if
$u\equiv_{M} v$ modulo $\Re^{+}$ also.
\end{proof}

 We say that there is a \emph{cyclical ambiguity} in $w$ if
$w\looparrowright^{*}u$ and $w\looparrowright^{*}v$, where $u$ and $v$ are not cyclic conjugates.
If there exist cyclically conjugate words $z$ and $z'$ in $\Sigma^{*}$ such that
$u\looparrowright^{*}z$ and $v\looparrowright^{*}z'$,  then we say that this cyclical ambiguity \emph{resolves}.
Clearly,  a rewriting system is cyclically confluent if and only if all the cyclical ambiguities resolve. Now, we show
that whenever the algorithm of cyclical completion does not fail, the rewriting system obtained $\Re^{+}$ is cyclically
complete.
\begin{prop}\label{thm_proof_algo_compl}
Let $\Re$ be a complete and cyclically terminating  rewriting system, we
assume that $\Re$ is finite.
Let $\Re^{+}$ be the cyclical rewriting system obtained from the application of the algorithm of cyclical completion on
$\Re$. Then $\Re^{+}$ is cyclically complete.
\end{prop}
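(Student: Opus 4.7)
The plan is to prove the two halves of cyclical completeness separately: first that $\looparrowright^{+}$ is cyclically terminating, and then that it is cyclically confluent.

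For termination, I would observe that any $\looparrowright^{+}$-chain $u_{1} \looparrowright^{+} u_{2} \looparrowright^{+} \ldots$ splits into steps of two kinds: original cyclical reductions inherited from $\Re$, and newly added reductions produced by the algorithm. Since $\Re$ is cyclically terminating by hypothesis, no infinite sub-chain can consist of original reductions alone. Moreover, every added reduction $z_{1} \looparrowright^{+} z_{2}$ has source and target that are, by construction, cyclically irreducible modulo $\Re$; hence, once an added reduction has been applied, no further original reduction applies, and the remainder of the chain must consist exclusively of added reductions. The no-contradiction rule of the algorithm ensures that at every stage each cyclically irreducible word has at most one outgoing added reduction, and a successful run of the algorithm leaves the induced functional graph on cyclically irreducible words acyclic, ruling out any infinite tail of added reductions.

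For confluence, I would appeal to the Newman-style equivalence already recorded in Section~$2$ (a cyclically terminating system is cyclically confluent iff it is locally cyclically confluent) and reduce to showing local cyclical confluence of $\looparrowright^{+}$. Given $w \looparrowright^{+} v_{1}$ and $w \looparrowright^{+} v_{2}$, I would split into cases by the type of each step. If either of the two steps is a newly added reduction, then $w$ is cyclically irreducible modulo $\Re$, which forces the other step to be an added reduction as well; but then $w$ would have two outgoing added reductions, contradicting the no-contradiction rule of the algorithm. The remaining case is when both steps are original reductions, yielding a triple $(w, r_{1}, r_{2})$. If the triple is $\widetilde{c}$-defined, local confluence at $w$ follows directly from Lemma~\ref{lem_one_step}. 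If it is not, then by Lemma~\ref{lem_pref_overlap} there is a cyclical overlap or cyclical inclusion between $r_{1}$ and $r_{2}$, and the algorithm has handled exactly this overlap or inclusion: it first cyclically reduces $v_{1}$ and $v_{2}$ to cyclically irreducible representatives $z_{1}$ and $z_{2}$ via $\looparrowright^{*}$, and then adds a single cyclical reduction between $z_{1}$ and $z_{2}$ (in one direction or the other). Consequently $v_{1} \looparrowright^{+*} z$ and $v_{2} \looparrowright^{+*} z'$ with $z \bumpeq z'$, establishing local cyclical confluence at $w$.

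The main obstacle is to justify that the newly added reductions do not themselves spawn a further generation of unresolved cyclical overlaps or cyclical inclusions that would require re-running the completion. This is where the fact that $\looparrowright$ is not compatible with concatenation is decisive: an added reduction $z_{1} \looparrowright^{+} z_{2}$ relates specific whole words rather than acting as a rule that can be applied inside arbitrary contexts, so it cannot generate the kind of overlap or inclusion ambiguities analysed in Section~$5$. Combined with Lemma~\ref{lem_new_cycl_rule_equiv} (which guarantees that each added reduction respects the conjugacy equivalence modulo $\Re$), this means that once all original cyclical overlaps and inclusions of $\Re$ have been resolved by the algorithm, no new ambiguities are introduced, so $\Re^{+}$ is cyclically confluent and hence, together with the termination argument above, cyclically complete.
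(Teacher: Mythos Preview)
Your argument is correct and follows essentially the same line as the paper's: the confluence case analysis (an added step and an original step cannot share a source because the source of any added step is cyclically irreducible modulo $\Re$; two added steps with the same source are excluded by the no-contradiction clause; two original steps are resolved either by Lemma~\ref{lem_one_step} or by the completion step via Lemma~\ref{lem_pref_overlap}) is exactly what the paper does, only you spell it out in more detail. The one substantive difference is that the paper's proof simply opens with ``We need to show that $\Re^{+}$ is cyclically confluent'' and never argues termination of $\looparrowright^{+}$, whereas you supply an explicit argument (original steps terminate by hypothesis; after the first added step only added steps can follow; the functional graph of added steps is acyclic on a successful run), which is a genuine addition rather than a different route.
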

\begin{proof}
 We need to show that $\Re^{+}$ is cyclically confluent.
Clearly, by the application of the algorithm of cyclical completion on $\Re$ the cyclical overlaps and inclusions in
$\Re$ are resolved. So, it remains to show that the addition of the new cyclical rules in $\Re^{+}$ does not create a
cyclical ambiguity.  If a cyclical ambiguity occurs, then there should be one of the following kind of rules in
$\Re^{+}$:\\
- $u \looparrowright^{+} v$ and $l\rightarrow x$, where $l\bumpeq u$.\\
- $u \looparrowright^{+} v$ and $l \looparrowright^{+} x$, where $l\bumpeq u$.\\
The first case cannot occur, since $u$ is cyclically irreducible modulo $\Re$ and the second case cannot occur, since
in this case the algorithm of cyclical completion fails.
\end{proof}
\section{Length-preserving rewriting systems}
\hspace{0.5cm}We say that a rewriting system $\Re$ is
\emph{length-preserving} if $\Re$ satisfies the condition that
the left-hand sides of rules have the same length as their
corresponding right-hand sides. We show that if $\Re$ is a length-preserving
 rewriting system, then an infinite sequence of cyclical reductions occur only if there is a
 repetition of some word in the sequence or if a word and its cyclic conjugate occur there. Using this fact, we define
an equivalence relation on the words that permits us to obtain some partial results in the case that  $\Re$ is not
cyclically terminating.
\begin{lem}\label{lem_length_termin}
Let $\Re$ be a complete rewriting system that is length-preserving. If there is an infinite sequence of cyclical reductions, then it contains
(at two different positions) words that are  cyclic  conjugates  .
\end{lem}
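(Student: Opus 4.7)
The plan is to reduce the lemma to a direct pigeonhole argument once I have established that every word in the sequence has the same length and lives over a finite alphabet. First I would verify that a single cyclical reduction $u \looparrowright v$ preserves length: by definition $u \circlearrowleft^{i} \widetilde{u} \rightarrow v$, and $\circlearrowleft^{i}$ is a permutation of the letters of $u$, so $|u| = |\widetilde{u}|$; the arrow $\widetilde{u} \rightarrow v$ is the application of some rule $(\ell,r) \in \Re$, which has $|\ell| = |r|$ by the length-preserving hypothesis, hence $|\widetilde{u}| = |v|$. Thus every step of cyclical reduction leaves the length unchanged.

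Applying this inductively to an arbitrary infinite sequence $u_{1} \looparrowright u_{2} \looparrowright u_{3} \looparrowright \cdots$ shows that all the $u_{i}$ have the common length $n = |u_{1}|$. Since $M$ is a finitely presented monoid, the alphabet $\Sigma$ is finite, and therefore the ambient set $\Sigma^{n}$ in which the sequence lives contains only the $|\Sigma|^{n}$ words of length $n$. The pigeonhole principle now forces the infinite sequence to revisit some word: there exist indices $i < j$ with $u_{i} = u_{j}$. In particular $u_{i} \bumpeq u_{j}$ via the trivial cyclic conjugation $\circlearrowleft^{0}$, so the sequence contains cyclic conjugates at two different positions, as required.

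There is essentially no real obstacle here; the lemma is a clean pigeonhole consequence of length-preservation, and the only point worth being careful about is that both factors $\circlearrowleft^{i}$ and $\rightarrow$ of a cyclical reduction individually preserve length. The conclusion is phrased in terms of cyclic conjugates rather than equal words presumably because that is the weakest statement needed to support the equivalence relation on words that the author announces for handling the case when $\Re$ fails to be cyclically terminating; the pigeonhole argument in fact produces the stronger conclusion of repeated occurrences of the \emph{same} word.
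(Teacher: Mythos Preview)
Your proof is correct and follows essentially the same pigeonhole argument as the paper: length-preservation forces all terms of the sequence into the finite set $\Sigma^{n}$, so a repetition must occur. You are more explicit than the paper in verifying that each factor of $\looparrowright$ preserves length and in observing that one actually gets a repeated \emph{word} (a stronger conclusion than mere cyclic conjugacy), but the underlying argument is identical.
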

\begin{proof}
 From the assumption,
applying $\Re$ on a word $u$ does not change its length $\ell(u)$, so all the words appearing in such an infinite
sequence have the same length.  Since the number of
words of length \emph{$\ell(u)$} is finite, an infinite sequence of cyclical reductions occurs only if  it contains
words that are  cyclic  conjugates  at two different positions.
\end{proof}

Note that using the same argument as in lemma \ref{lem_length_termin}, we have that if  $\Re$ is length-decreasing,
that is  all the left-hand sides of rules have  length greater than their
corresponding right-hand sides, then there is no infinite sequence of cyclical reductions, that is $\Re$ is cyclically
terminating. In the following lemma, we show  that if there is an infinite sequence of cyclical reductions that results
from the occurrence of a word $w$ and its cyclic conjugate $\widetilde{w}$, then there are some relations of
commutativity involving $w$ and $\widetilde{w}$.
This is not clear if these relations of commutativity are a sufficient condition for the occurrence of an infinite
sequence, nor if such a sufficient condition can be found.
\begin{lem}
Assume there is an infinite sequence $w \looparrowright^{*}\widetilde{w}$,
where $w \bumpeq \widetilde{w}$.
Then there are words $x,y$ such that $yx\widetilde{w}=_{M}\widetilde{w}yx$ and
 $xyw=_{M}wxy$.
\end{lem}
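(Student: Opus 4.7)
The plan is to read the two desired commutations directly off the finite path $w = u_0 \looparrowright u_1 \looparrowright \cdots \looparrowright u_k = \widetilde{w}$ (present inside the given infinite sequence), by recording at each step the left- and right-conjugacy relation it implicitly encodes in $M$.

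First I would analyze a single step $u_i \looparrowright u_{i+1}$. By definition, $u_i \circlearrowleft^{j_i}\tilde u_i \to u_{i+1}$; writing the free-monoid decomposition $u_i = a_i b_i$ with $|a_i| = j_i$, we get $\tilde u_i = b_i a_i$ and therefore, purely in the free monoid,
\[
u_i a_i \;=\; a_i b_i a_i \;=\; a_i \tilde u_i,\qquad b_i u_i \;=\; b_i a_i b_i \;=\; \tilde u_i b_i.
\]
Since $\tilde u_i \to u_{i+1}$ forces $\tilde u_i =_M u_{i+1}$, this yields in $M$
\[
u_i a_i =_M a_i u_{i+1},\qquad b_i u_i =_M u_{i+1} b_i.
\]

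Next, I would set $x := a_0 a_1 \cdots a_{k-1}$ and $y := b_{k-1} b_{k-2} \cdots b_0$ and telescope the two families of identities along the path to obtain
\[
w x =_M x\widetilde{w} \qquad\text{and}\qquad y w =_M \widetilde{w} y.
\]
Combining these, one substitution at a time, gives
\[
xyw =_M x\widetilde{w} y =_M wxy,\qquad yx\widetilde{w} =_M ywx =_M \widetilde{w} yx,
\]
which is exactly the conclusion.

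The only substantive step is the first — extracting a right- and a left-conjugacy relation from a single cyclical reduction via the identities $ua = a\tilde u$ and $bu = \tilde u b$ in the free monoid — after which everything is pure telescoping. I do not anticipate any real obstacle; one merely has to handle the degenerate case $|a_i|=0$ (which collapses to $u_i =_M u_{i+1}$) harmlessly and to order the $b_i$'s right-to-left so the chaining matches up. Note also that the word ``infinite'' in the hypothesis is used only to guarantee the existence (via Lemma 7.1) of a finite segment $w\looparrowright^\ast\widetilde w$ with $w\bumpeq\widetilde w$; the length of that segment plays no further role.
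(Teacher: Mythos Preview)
Your proof is correct and follows essentially the same route as the paper's. The paper simply invokes Proposition~\ref{prop_samerho_conj} to obtain words $x,y$ with $wx=_{M}x\widetilde{w}$ and $yw=_{M}\widetilde{w}y$, and then performs the identical two-line combination you wrote; you have merely unpacked that proposition by constructing the conjugators $x=a_0\cdots a_{k-1}$ and $y=b_{k-1}\cdots b_0$ explicitly along the path.
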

\begin{proof}
From Proposition \ref{prop_samerho_conj},  $w \equiv_{M} \widetilde{w}$, that is  there are words $x,y$ in $\Sigma^{*}$
such that $wx=_{M}x\widetilde{w}$ and $yw=_{M}\widetilde{w}y$. So, $wxy=_{M}x\widetilde{w}y=_{M}xyw$ and
$yx\widetilde{w}=_{M}ywx=_{M}\widetilde{w}yx$.
\end{proof}

We now define the following equivalence relation $\sim$ on $\Sigma^{*}$. Let $u,v$ be different words in $\Sigma^{*}$.
 We define $u \sim v$ if and only if $u\looparrowright^{*}v$ and
 $v\looparrowright^{*}u$, this is an equivalence relation.
 Clearly, if $\Re$ is cyclically terminating, then each
 equivalence class contains a single word, up to $\bumpeq$.
Now, we show that there is  a  partial solution to the left and right conjugacy problem, using
$\sim$ in the case that $\Re$ is not cyclically terminating. Note that given a word $w$ such  that
$\operatorname{Allseq}$$(w)$ does not terminate, it may occur one of the following; either there is no cyclically
irreducible form achieved in $\operatorname{Allseq}$$(w)$ (as in Ex. \ref{ex_not_termin_no_cyc_irred}) or there is a
unique cyclically irreducible form achieved in $\operatorname{Allseq}$$(w)$ (as in  Ex.
\ref{ex_braid_uniquecyclicalform}).
 \begin{prop}
 Let  $u$ and $v$ be in $\Sigma^{*}$. If there exists a word $z$ such that $u \sim z$ and $v \sim z$, then $u \equiv_{M}
v$.
 \end{prop}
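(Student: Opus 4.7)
The plan is to reduce the statement immediately to Proposition \ref{prop_samerho_conj}(i), which already tells us that cyclical reducibility implies left–right conjugacy. The hypothesis $u \sim z$ packages \emph{two} instances of $\looparrowright^{*}$, namely $u \looparrowright^{*} z$ and $z \looparrowright^{*} u$, but I only need one of them: applying Proposition \ref{prop_samerho_conj}(i) to $u \looparrowright^{*} z$ directly yields $u \equiv_{M} z$. The same application to $v \looparrowright^{*} z$ yields $v \equiv_{M} z$.

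Next, I would invoke the fact recalled in the introduction and in Section 3 that $\equiv_{M}$ is an equivalence relation on $\Sigma^{*}$. In particular it is symmetric and transitive, so from $v \equiv_{M} z$ we obtain $z \equiv_{M} v$, and then from $u \equiv_{M} z$ and $z \equiv_{M} v$ we conclude $u \equiv_{M} v$, as required.

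There is no real obstacle here: the relation $\sim$ is defined precisely so that each of its instances contains a forward cyclical reduction, which is the only ingredient Proposition \ref{prop_samerho_conj}(i) needs. The backward half of $\sim$ (i.e.\ $z \looparrowright^{*} u$ and $z \looparrowright^{*} v$) plays no role in establishing left–right conjugacy and would only be needed if we wanted to strengthen the conclusion, for instance to transposition; since $\operatorname{Trans}$ is not transitive in general, there is no hope of upgrading the statement in that direction without extra hypotheses, and I would leave the proposition in its stated form.
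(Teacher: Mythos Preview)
Your proof is correct and follows essentially the same path as the paper's: extract $u \looparrowright^{*} z$ and $v \looparrowright^{*} z$ from the definition of $\sim$, apply Proposition~\ref{prop_samerho_conj}(i) to get $u \equiv_{M} z$ and $v \equiv_{M} z$, and conclude by transitivity and symmetry of $\equiv_{M}$. Your additional remarks about the unused backward direction and the impossibility of upgrading to $\operatorname{Trans}$ are accurate but go beyond what the paper records.
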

 \begin{proof}
 If there exists a word $z$ such that $u \sim z$ and $v \sim z$, then from the definition of $\sim$ there are sequences
$u \looparrowright^{*} z $ and $v \looparrowright^{*}z$. From Proposition
\ref{prop_samerho_conj}, this implies $u \equiv_{M} z$ and $v \equiv_{M} z$, so  $u \equiv_{M} v$.
 \end{proof}
Note that the converse is not true as the following example illustrates it.
\begin{ex}
Let  $\Re=\{ bab\rightarrow aba,ba^{n}ba\rightarrow aba^{2}b^{n-1}, n \geq 2\}$.
It holds that $a\equiv_{M}b$, since $a(aba)=_{M}(aba)b$ and
$(aba)a=_{M}b(aba)$. Yet, there is no sequence of cyclical reductions such that $a \sim b$.
\end{ex}

We can consider a  rewriting system that is not length increasing (that is  all the rules preserve or decrease the
length) to be \emph{cyclically terminating up to $\sim$} and apply on it the algorithm of cyclical completion and
obtain that it is \emph{cyclically complete up to $\sim$}. This is due to the fact that  also in this case infinite
cyclical sequences would result from the occurrence of a word and its cyclic conjugate.  If there exists a cyclically
irreducible form then it is unique, but the existence of a cyclically irreducible form is not ensured.
 The complete and finite rewriting system $\Re$ from Ex. \ref{ex_not_cyc_confluent} illustrates this situation. It is not length increasing and not cyclically terminating, since there are infinite sequences of cyclical reductions (as an example $\Delta a \rightarrow b \Delta \circlearrowleft ^{1} \Delta b \rightarrow a \Delta$). The application of the algorithm of cyclical completion on $\Re$ gives  $\Re^{+}=\Re \cup \{\underline{ab} \looparrowright^{+} \underline{ba}\}$ that is cyclically complete up to $\sim$. But, nevertheless there are words that have no cyclically irreducible form  ($\Delta a$ for example).

\bibliographystyle{amsplain}
\bibliography{bibliographie_10Juillet2010}

\end{document}